\newtheorem{thm}{Theorem}[section]
\newtheorem{cor}[thm]{Corollary}
\newtheorem{lem}[thm]{Lemma}
\newtheorem{prop}[thm]{Proposition}
\newtheorem{prop-def}[thm]{Proposition-Definition}
\theoremstyle{definition}
\newtheorem{Def}[thm]{Definition}
\theoremstyle{remark}
\newtheorem{Rem}[thm]{\bf Remark}
\newtheorem{ex}[thm]{\bf Example}
\numberwithin{equation}{section}
\def\la{\lambda}
\def\op{{\rm op}}
\def\Q{\mathcal{Q}}
\def\U{\mathcal{U}}
\def\K{\mathbb{K}}
\def\ot{\otimes}
\def\deg{{\rm deg}}
\def\al{\alpha}
\def\arr{\overrightarrow}
\def\Ext{{\rm Ext}}
\def\xr{\xrightarrow}
\def\HQ{{\rm HQ}}
\def\H{{\rm H}}
\def\HH{{\rm HH}}
\def\HL{{\rm HL}}
\def\HC{{\rm HC}}
\def\LC{{\rm LC}}
\def\QC{{\rm QC}}
\def\Hom{{\rm Hom}}
\def\K{\mathbb{K}}
\def\vp{\varphi}
\def\id{{\rm id}}
\def\Im{{\rm Im}}
\def\Ker{{\rm Ker}}
\def\Der{{\rm Der}}
\def\1{\mathbbold{1}}
\begin{document}
\title[On quasi-Poisson cohomology]
{On quasi-Poisson cohomology}

\author[Y.-H. Bao and Y.Ye]
{Yan-Hong Bao\quad and Yu Ye}

\thanks{Supported by National Natural Science Foundation of China (No.10971206 and No.11126173).}
\subjclass[2010]{17B63, 20G10}
\date{\today}
\thanks{E-mail:
yhbao$\symbol{64}$ustc.edu.cn (Y.-H. Bao), yeyu@ustc.edu.cn (Y.Ye)}
\keywords{Poisson algebra, Quasi-Poisson module, Quasi-Poisson enveloping
algebra, Quasi-Poisson cohomology}

\maketitle

\dedicatory{}%
\commby{}%

\begin{abstract}
Let $A$ be a Poisson algebra and $\Q(A)$ its quasi-Poisson
enveloping algebra. In this paper, the Yoneda-Ext groups $\Ext^*_{\Q(A)}(A, A)$, which we call the quasi-Poisson
cohomology groups of $A$, are investigated. To calculate them we construct
a free resolution of $A$ as a $\Q(A)$-module. Moreover, we introduce the quasi-Poisson complex for $A$ to
simplify the calculation, and apply to obtain quasi-Poisson cohomology groups in some special cases. Finally,
by constructing a Grothendieck spectral sequence, we provide a way to calculate the quasi-Poisson cohomology
groups via the Hochschild cohomology and the Lie algebra cohomology. Examples are also shown here.
\end{abstract}

\section{Introduction}

Poisson algebra appears naturally in Hamiltonian mechanics. It plays
a central role in the study of Poisson geometry: A Poisson manifold is
a smooth manifold whose associative algebra of smooth functions is
equipped with the structure of a Poisson algebra. To meet the development
of the deformation of Poisson manifolds, it is helpful to introduce certain
deformation theory for Poisson algebras. For this one needs a more general
concept of Poisson structures. Note that there are many generalizations of
Poisson structures developed by different authors from different
perspectives, see for instance, \cite{Cr}, \cite{FGV}, \cite{RVO},
\cite{Van} and \cite{Xu}.

We are interested in the notion of Poisson algebra as introduced in \cite{FGV}.
By definition, a (non-commutative) \emph{Poisson algebra} over a field $\K$ consists
of a triple $(A,\cdot, \{-,-\})$, where $(A,\cdot)$ is an associative $\K$-algebra and
$(A,\{-,-\})$ is a Lie algebra over $\K$, such that the Leibniz rule $\{a, bc\}=\{a,
b\}c+b\{a, c\}$ holds for all $a,b,c\in A$. Unlike the classical ones, the associative
multiplication is not necessarily commutative. This is the most simple
way of generalization in algebraic nature, and has been studied by many authors
\cite{FL, Ku1, Ku2, Ku3, YYY, YYZ}.

We mention that this notion is different from
the non-commutative Poisson algebra developed by Xu \cite{Xu}.
As a natural generalization of commutative Poisson algebra, such a structure carries rich
information. Even in the study of commutative Poisson algebras, it will be quite useful.
For example, an interesting observation shows that the deformation quantization theory by
Kontsevich can be understood as a special kind of deformation within this version of Poisson
algebras, see \cite{BY} for more detail. In this paper and its sequel, we will make an attempt to investigate deeply
the cohomology theories for Poisson algebras.

A standard way to construct cohomology theory for an algebraic structure is to consider its module
category. Poisson module over a Poisson algebra was defined in \cite{FGV} in a natural way, see
also \cite{Ku2}. Later on in \cite{YYY}, the authors studied another version of module, say
quasi-Poisson module, over a Poisson algebra. Moreover, they introduced the (quasi-)Poisson
enveloping algebra for a Poisson algebra, and showed that the category of
(quasi-)Poisson modules is equivalent to the category of modules over the (quasi-)Poisson enveloping
algebra, see also section 2 below for detail. We emphasize that for commutative Poisson algebras, a
different version of Poisson enveloping
algebra has been introduced in \cite{oh}, see \cite[Remark 2.6]{YYY} for an explanation.

An immediate consequence is that the category of
(quasi-)Poisson modules has enough projective and injective objects,
which enables the construction of the cohomology theory for a Poisson
algebra by using projective or injective resolutions.
We aim to develop a cohomology theory, the quasi-Poisson cohomology, for Poisson algebras in this paper.
The ideal is quite easy. For a Poisson algebra $A$,
$A$ itself is a quasi-Poisson module, and naively we may study its Yoneda extension group
in the category of quasi-Poisson modules, which we call the quasi-Poisson cohomology group of $A$.

As we will show below, the quasi-Poisson cohomology relates to the Hochschild cohomology and Lie algebra
cohomology closely. In fact, there exists a Grothendieck spectral sequence, connecting the
 quasi-Poisson cohomology with the Hodchschild cohomology and the Lie algebra cohomology. In
 some extreme cases, the quasi-Poisson cohomology algebra is shown to be the tensor product
 of the Hochschild cohomology and the Lie algebra cohomology.

The quasi-Poisson cohomology plays an important role in the calculation of so-called Flato-Gerstenhaber-Voronov
Poisson cohomology as introduced in \cite{FGV}, a cohomology which controls the formal deformation of a
Poisson algebra, see \cite{BY} for details.
Another application is to the study of an associative algebra, say the quasi-Poisson cohomology group of
the standard Poisson algebra will give us an interesting invariant, carrying the
information of both the Hochschild cohomologies and the Lie algebra cohomologies.

Throughout $\K$ will be a field of characteristic zero. All algebras
considered are over $\K$ and we write $\ot=\ot_\K$ and $\Hom=\Hom_\K$ for brevity.
The paper is organized as follows.

Section 2 recalls basic definitions and notions.
In section 3, we introduce the concept of quasi-Poisson cohomology
groups for a Poisson algebra $A$, and in order to calculate it we construct a free resolution
of $A$ as a quasi-Poisson module, see Theorem \ref{proj.resol}. Moreover,
we introduce the quasi-Poisson complex for a Poisson algebra, which will simplify the
calculation. As an application,
some special quasi-Poisson cohomology groups are obtained in Section 4.
In Section 5, we construct a Grothendieck spectral sequence
for a smash product algebra, which connects the quasi-Poisson cohomology
with the Hochschild cohomology and the Lie algebra cohomology
when applying to a Poisson algebra.

\section{Preliminaries}

In this paper, we assume that all associative algebras will have a
multiplicative identity element.

Let $(A,\cdot,\{-,-\})$ be a Poisson algebra (not necessarily
commutative). A \emph{quasi-Poisson $A$-module} $M$ is a an
$A$-$A$-bimodule $M$ together with a $\K$-bilinear map
$\{-,-\}_\ast: A\times M\to M$, which satisfies
$$\{a,bm\}_\ast=\{a,b\}m+b\{a,m\}_\ast,\leqno (1.1)$$
$$\{a,mb\}_\ast=m\{a,b\}+\{a,m\}_\ast b,\leqno (1.2)$$
$$\{\{a,b\},m\}_\ast=\{a,\{b,m\}_\ast\}_\ast-\{b,\{a,m\}_\ast\}_\ast
\leqno (1.3)$$
for all $a,b\in A$ and $m\in M$. Clearly, the condition (1.3) just says
that $M$ is a Lie module over $A$. If moreover,
$$\{ab,m\}_\ast=a\{b,m\}_\ast+\{a,m\}_\ast b\leqno (1.4)$$
holds for all $a,b\in A$ and $m\in M$, then $M$ is called a \emph{Poisson $A$-module}. Let $M, N$ be quasi-Poisson modules  ({\it resp}. Poisson modules).
A homomorphism of quasi-Poisson $A$-module ({\it resp}. Poisson modules)
is a $\K$-linear function $f:M\to N$ which is
a homomorphism of both $A$-$A$-bimodules and Lie modules.

Let us recall the definition of (\emph{quasi}-)\emph{Poisson enveloping algebra} of a Poisson algebra,
see \cite{YYY} for more detail. Before that, we need some convention.

Denote by $A^\op$ the \emph{opposite algebra} of the associative
algebra $A$. Usually we use $a$ to denote an element in $A$ and $a'$ its counterpart in $A^\op$ to show the difference.
Let $\U(A)$ be the \emph{universal enveloping algebra} of the
Lie algebra $A$. Fix a $\K$-basis $\{v_i|i\in S\}$ of $A$,
where $S$ is an index set with a total ordering "$\le$". Let $\al=(i(1),\cdots,i(r))\in S^r$ be a sequence of length $r$ in $S$. Usually we call
$r$ the \emph{degree} of $\al$. Denote the element
$v_{i(1)}\ot \cdots\ot v_{i(r)}$ by $\arr{\al}$. If $i(1)\le \cdots \le i(r)$, then we call $\arr{\al}$ a
\emph{homogeneous element} of degree $r$. The empty sequence, or the sequence of degree 0, is denoted by $\varnothing$ and we write $\1=1_{\U(A)}=\arr{\varnothing}$ for brevity. Then
all homogeneous elements of positive degrees together with $\1$ form a
PBW-basis of $\U(A)$. For given $\al=(i(1),\cdots,i(r))$ and
$\beta=(j(1),\cdots,j(s))$, we define $\al\vee \beta:=(i(1),\cdots,
i(r),j(1),\cdots,j(s))$, and hence $\arr{\al}\arr{\beta}=\arr{\al\vee
\beta}$ under these notations.

Set $\underline{\rm{r}}=\{1,\cdots, r\}$. By an \emph{ordered bipartition} $\underline{\rm{r}} = X\sqcup Y$ of $\underline{\rm{r}}$, it is meant that $X$ and $Y$ are disjoint subsets of $\underline{\rm{r}}$ and $\underline{\rm{r}}= X\cup Y$, here
"ordered" means that $X\sqcup Y$ and $Y\sqcup X$ give different
bipartitions, which differs from the usual ones. Moreover, $X$ and $Y$ are allowed to be
empty sets. Let $\alpha=(i(1),\cdots,i(r))$ and $\underline{\rm{r}}= X\sqcup Y$.
Suppose $X=\{X_1,\cdots, X_{|X|}\}$ and $Y=\{Y_1,
\cdots, Y_{|Y|}\}$ with $X_1< X_2<\cdots<X_{|X|}$ and $Y_1< Y_2<\cdots< Y_{|Y|}$. Set
$\alpha_X=(i(X_1), \cdots, i(X_{|X|}))$ and $\alpha_Y=
(i(Y_1), \cdots, i(Y_{|Y|}))$. By definition
$\alpha=\alpha_{X}\sqcup\alpha_{Y}$ is called an
\emph{ordered bipartition} of $\alpha$ with respect to the ordered bipartition
$\underline{\rm{r}}=X\sqcup Y$.
 Similarly, one defines \emph{ordered $n$-partitions} $\alpha=
\alpha_1\sqcup\alpha_2\cdots\sqcup\alpha_n$ for any $n\ge2$.

The category of Lie modules over $A$ is
equivalent to the category of left $\U(A)$-modules. Notice that
$\U(A)$ is a cocommutative Hopf algebra with the "shuffle" coproduct
$\Delta(\arr{\al})=\sum\limits_{\al=\al_1\sqcup \al_2}
\arr{\al_1}\ot \arr{\al_2}$, where
the summation is taken over all possible ordered partitions of $\{1,\cdots, {\rm
deg}\,(\overrightarrow{\al})\}$, and the counit given by $\epsilon(\1)=1$,
$\epsilon(\arr{\al})=0$ for any $\arr{\al}$ of degree $\ge 1$. The Lie bracket makes $A$ a Lie
module and hence a $\U(A)$-module with the action given by
$\arr{\al}(a)=\{v_{i(1)},\{v_{i(2)},\{, \cdots,\{v_{i(r)},a\}\cdots
\}\}\}$ for $\al=(i(1),\cdots,i(r))\in S^r$ and $a\in A$. Moreover,
by the cocommutativity of $\U(A)$, the enveloping algebra $A^e=A\ot
A^\op$ of $A$ in the associative sense is a $\U(A)$-module algebra
with the action $\arr{\al}(a\ot b') =\sum\limits_{\al=\al_1\sqcup
\al_2} \arr{\al_1}(a)\ot \arr{\al_2}(b')$ for all $\al\in S^r$ with
$r\ge 0$ and $a\ot b'\in A^e$. Thus we have the following definition.

\begin{Def} {\rm (\cite{YYY})}
Let $A=(A,\cdot,\{-,-\})$ be a Poisson algebra. The smash product
$A^e\# \U(A)$ is called the \emph{quasi-Poisson enveloping algebra}
of $A$ and denoted by $\Q(A)$.
\end{Def}

\begin{Rem}{\rm By definition, $\Q(A)=A\ot A^\op\ot \U(A)$ as a $\K$-vector space.
Thus $\Q(A)$ has a PBW-basis given by
$$\{v_i\ot v'_j\# \arr{\al} \mid i,j\in S, \al=(i(1),\cdots,i(r))\in S^r,
i(1)\le \cdots \le i(r), r\ge 0\}.$$
The multiplication is given by
$$(v_{i_1}\ot v'_{j_1}\# \arr{\al})(v_{i_2}\ot v'_{j_2}\# \arr{\beta})
=\sum_{\al=\al_1\sqcup \al_2\sqcup \al_3}
(v_{i_1}\arr{\al_1}(v_{i_2}))\ot (v'_{j_1}(\arr{\al_2}(v_{j_2})')
\# (\arr{\al_3}\arr{\beta})$$
for $i_1,j_1,i_2,j_2\in S, \arr{\al}\in S^r,\arr{\beta}\in S^s, r,s\ge 0$,
and the identity in $\Q(A)$ is $1_A\ot 1_A'\# \1$.}
\end{Rem}

\begin{thm}{\rm (\cite{YYY})}
The category of quasi-Poisson modules
is equivalent to the category of $\Q(A)$-modules.
\end{thm}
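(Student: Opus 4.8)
The plan is to exhibit the equivalence as an identification of structures, exploiting the general description of modules over a smash product. Recall the standard fact that for a Hopf algebra $H$ acting on an algebra $B$ so that $B$ becomes an $H$-module algebra, a left $(B\#H)$-module is precisely a vector space $M$ that is simultaneously a left $B$-module and a left $H$-module, subject to the single compatibility condition
$$h\cdot(b\cdot m)=\sum (h_{(1)}\cdot b)\cdot(h_{(2)}\cdot m)$$
for all $h\in H$, $b\in B$, $m\in M$, where $\Delta(h)=\sum h_{(1)}\ot h_{(2)}$; moreover, morphisms of $(B\#H)$-modules are exactly the maps that are simultaneously $B$-linear and $H$-linear. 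I would take this as the starting point and specialize it to $B=A^e$ and $H=\U(A)$.

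The two functors are then built from the two halves of the quasi-Poisson structure. Given a quasi-Poisson module $M$, its underlying $A$-$A$-bimodule is exactly a left $A^e$-module via $(a\ot b')\cdot m=amb$, and condition (1.3) says precisely that $M$ is a Lie module over $A$, hence a left $\U(A)$-module by the universal property of $\U(A)$, with $\arr{\al}$ acting by the iterated bracket $\arr{\al}\cdot m=\{v_{i(1)},\{\cdots,\{v_{i(r)},m\}_\ast\cdots\}_\ast\}_\ast$. Conversely, a $\Q(A)$-module $N$ restricts along $a\ot b'\mapsto a\ot b'\#\1$ to an $A^e$-module, i.e.\ a bimodule, and the degree-one part of $\U(A)$ supplies the bracket $\{a,n\}_\ast:=(1_A\ot 1_A'\#a)\cdot n$; here condition (1.3) holds because it is just the defining relation of $\U(A)$ translated via the commutator. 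These assignments are visibly mutually inverse on underlying data.

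The crux is to match the compatibility condition with (1.1) and (1.2). Since $\U(A)$ is generated as an algebra by its degree-one elements, and since the compatibility condition is multiplicative in $h$---if it holds for $h$ and $h'$ then it holds for $hh'$, as one checks using that $B$ is an $H$-module algebra and that $\Delta$ is an algebra map---it suffices to verify it for $h=a\in A$. Such an $a$ is primitive, $\Delta(a)=a\ot\1+\1\ot a$, and it acts on $A^e$ as the derivation $a\cdot(x\ot y')=\{a,x\}\ot y'+x\ot\{a,y\}'$. The compatibility condition thus reads
$$\{a,xmy\}_\ast=\{a,x\}my+xm\{a,y\}+x\{a,m\}_\ast y,$$
and setting $y=1$ recovers (1.1) while setting $x=1$ recovers (1.2); conversely (1.1) together with (1.2) yields the displayed identity by applying them successively. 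This is the one genuine computation in the proof.

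The remaining point, that morphisms correspond, is immediate: a $\Q(A)$-linear map is the same as a map that is both $A^e$-linear and $\U(A)$-linear, which unwinds to a homomorphism of $A$-$A$-bimodules that also commutes with $\{-,-\}_\ast$---exactly a homomorphism of quasi-Poisson modules. The main obstacle I anticipate is bookkeeping rather than conceptual: one must check carefully that the smash-product multiplication written via ordered bipartitions really does reduce, through the generation of $\U(A)$ in degree one, to the primitive case, so that no higher shuffle terms are overlooked when passing between the two descriptions.
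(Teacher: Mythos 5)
Your proof is correct, and the two mutually inverse assignments you construct are exactly the ones the paper records immediately after the theorem (which it does not prove itself but cites from \cite{YYY}). Your verification via the general description of modules over a smash product $B\#H$, with the compatibility condition checked only on the primitive degree-one generators of $\U(A)$ and then propagated by the multiplicativity of that condition in $h$, is a clean and complete way to supply the omitted argument; the identification of (1.1)--(1.2) with the primitive-element compatibility and of (1.3) with the $\U(A)$-module axiom is exactly right.
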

To be precise, for a given a quasi-Poisson $A$-module $M$, one can define a $\Q(A)$-module $M$ with the action given by
\[(a\ot b' \#\arr{\al})m =a\arr{\al}(m)b\]
for all $m\in M$ and $a\ot b' \#\arr{\al}\in \Q(A)$. Conversely,
given a left $\Q(A)$-module $M$, we set
\[am=(a\ot 1'_A\# \1)m,~~ ma=(1_A\ot a'\#\1)m,~~\mbox{{\rm and}}~~ \{a,m\}_\ast=(1_A\ot 1'_A\# a)m\]
for all $m\in M, a\in A$ to obtain a quasi-Poisson module over $A$.

Consequently, there are enough projectives and injectives
in the category of quasi-Poisson modules,
which enables the construction of cohomology theories for a Poisson
algebra by using projective or injective resolutions in a standard way.

\section{Quasi-Poisson Cohomology}

One checks easily that under the action $\{-,-\}_\ast = \{-,-\}$, the regular
$A$-$A$-bimodule $A$ becomes a
quasi-Poisson module and hence a left module over $\Q(A)$. Then we
may consider the Yoneda-Ext groups $\Ext_{\Q(A)}^{\ast}(A,M)$ for any quasi-Poisson
module  $M$.

\begin{Def} Let $A$ be a Poisson algebra and $\Q(A)$ the quasi-Poisson
enveloping algebra of $A$. For any quasi-Poisson module $M$, the
extension group $\Ext^n_{\Q(A)}(A,M)$ is called the $n$-th
\emph{quasi-Poisson cohomology group} of $A$ with coefficient in the
quasi-Poisson module $M$, and denoted by $\HQ^n(A,M)$.
\end{Def}

\begin{Rem} The extension group $\HQ^n(A,A)$ is simply denoted by
$\HQ^n(A)$. One may consider the Yoneda-Ext algebra
$\HQ^\ast(A)=\bigoplus\limits_{n\ge 0} \HQ^n(A)$ with the multiplication
given by the Yoneda product, which is also called the \emph{quasi-Poisson cohomology algebra} of $A$.
Clearly, $\HQ^\ast (A)$ is
positively graded and each $\HQ^\ast (A,M)$ is a graded right $\HQ^\ast (A)$-module.
\end{Rem}

\subsection{A free resolution of $A$ as a $\Q(A)$-module}

In the sequel, we will construct a projective resolution of $A$ as a $\Q(A)$-module,
so that we can compute the cohomology groups $\Ext_{\Q(A)}^n(A, M)$ in a standard way.

To simplify notation, for each $i, j\ge 0$, we denote by $A^i$ and $\wedge^j$
the $i$-th tensor product and $j$-th exterior power of the $\K$-space $A$ respectively.

Our construction is based on the following two
well-known resolutions. One is
\[\mathbb{S}\colon\ \cdots\to A^{ i+2}\xr{\delta_i} A^{ i+1} \to \cdots \to A\ot A\ot A
\xr{\delta_1} A\ot A\xr{\delta_0} A\to 0,\]
the bar resolution of $A$ as an $A^e$-module($A$-$A$-bimodule), where
\[\delta_i(a_0\ot a_1\ot \cdots \ot a_{i+1})
=\sum_{k=0}^i (-1)^k a_0\ot \cdots \ot a_ka_{k+1}\ot \cdots \ot
a_{i+1}.\] The other one is the Koszul resolution of $\K$ as a trivial
$\U(A)$-module, say
\[\mathbb{C}: \cdots\to \U(A)\ot \wedge^j\xr{d_j} \U(A)\ot \wedge^{j-1}
\to \cdots \to \U(A)\ot \wedge^1\xr{d_1} \U(A)\xr{\epsilon} \K\to
0,\] where $\epsilon$ is the counit map, i.e., $\epsilon(\1)=1$, and
$\epsilon(\arr{\al})=0$ for all $r>0$ and $\al\in S^r$. The differential
is given by
\begin{equation*}
\begin{split}
d_j(\arr{\al}\ot v_1\wedge \cdots \wedge v_j) &= \sum_{l=1}^j
(-1)^{l+1} (\arr{\al}\ot v_l)
\ot (v_1\wedge \cdots\widehat{v}_l \cdots\wedge v_j)\\
&+\sum_{1\le p<q\le j}(-1)^{p+q}\arr{\al}\ot ( \{v_p,v_q\}\wedge
v_1\wedge\cdots\widehat{v}_p\cdots\widehat{v}_q \cdots\wedge
v_j),
\end{split}
\end{equation*}
where the symbol $\widehat{v}_l$ indicates that the term $v_l$ is to be
omitted.

Denote by  $\mathbb{S}'$ and $\mathbb{C}'$ the deleted complexes of $\mathbb{S}$ and $\mathbb{C}$ respectively.
Consider the double complex $\mathbb{S}'\ot\mathbb{C}'$,
{\footnotesize
\[\begin{CD}
&&  \cdots          &&         \cdots                &&     \cdots           &&\\
 &&  @VVV                  @VVV                    @VVV           &&\\
0 @<<< A^4\ot \U(A) @<<< A^4\ot \U(A)\ot \wedge^1 @<<< A^4\ot\U(A)\ot \wedge^2 @<<< \cdots\\
  &&  @VVV                  @VVV                    @VVV           &&\\
0 @<<< A^3\ot \U(A) @<<< A^3\ot \U(A)\ot \wedge^1 @<<< A^3\ot\U(A)\ot \wedge^2 @<<< \cdots\\
  &&  @VVV                  @VVV                    @VVV           &&\\
0 @<<< A^2\ot \U(A) @<<< A^2\ot \U(A)\ot \wedge^1 @<<< A^2\ot\U(A)\ot \wedge^2 @<<< \cdots\\
  &&  @VVV                  @VVV                    @VVV           &&\\
  &&  0          &&         0                &&     0           &&
\end{CD}\]
}
and obtain its total complex $\mathbb{Q}'= \mathrm{Tot}(\mathbb{S}'\ot\mathbb{C}')$,
\begin{align}\label{delete-quasi-res} \mathbb{Q}':
\cdots \to Q_n\xr{\varphi_n} Q_{n-1}\to \cdots \to Q_1\xr{\varphi_1}
Q_0\to 0.
\end{align}
 To be precise, $Q_0=A^2\ot \U(A)$, and for $n\ge 1$,
\begin{align*}
Q_n &=\bigoplus_{i+j=n}A^{i+2}\ot \U(A)\ot \wedge^j,\\
\vp_n &=\bigoplus_{i+j=n}(\delta_i\ot \id +(-1)^i\id\ot d_j).
\end{align*}

The following lemmas will be handy for later use. Some of them seem to be well known to experts.
For the convenience of the reader, we also include a proof.
\begin{lem} \label{proj}
For any $n\ge 0$,
$Q_n$ is a free module over $\Q(A)$ under
the action
\begin{align*}
&(v_{i_1}\ot v'_{j_1}\# \arr{\al})
(v_1\ot \cdots \ot v_i\ot \arr{\gamma}\ot \omega^j)\\
:=&\sum_{\al=\al_1\sqcup \cdots \sqcup \al_{i+1}}
v_{i_1}\arr{\al}_1(v_1)\ot\arr{\al}_2(v_2)\ot \cdots \ot
\arr{\al}_{i-1}(v_{i-1})\ot \arr{\al}_i(v_i)v_{j_1}\ot \arr{\al}_{i+1}\arr{\gamma}
\ot \omega^j
\end{align*}
for all $v_{i_1}\ot v'_{j_1}\# \arr{\al}\in \Q(A)$, $v_1\ot \cdots \ot v_i\ot \arr{\gamma}\ot \omega^j\in Q_n$.
\end{lem}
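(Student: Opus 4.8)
The plan is to show that $Q_n$ is free over $\Q(A)$ by exhibiting an explicit $\K$-basis that is *free* in the sense that the given $\Q(A)$-action carries a natural set of "generators" bijectively onto a PBW-type basis of $Q_n$. The key observation driving everything is that, as a $\K$-vector space, $Q_n = \bigoplus_{i+j=n} A^{i+2}\ot \U(A)\ot \wedge^j$, and in each summand the action formula is designed so that the leftmost and rightmost tensor factors $v_1$ and $v_i$ absorb the $A$-$A$-bimodule action of $v_{i_1}$ and $v'_{j_1}$, while $\arr{\al}$ acts via the "shuffle" coproduct of $\U(A)$ distributed across all $i+1$ interior slots. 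So I expect $Q_n$ to be free with a basis indexed by the "middle" tensor factors together with the exterior part.

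First I would fix the index set $S$ with its total ordering and write down a candidate free basis. Concretely, for the summand $A^{i+2}\ot\U(A)\ot\wedge^j$ I would take the set
\begin{align*}
\mathcal{B}_{i,j}=\{\,1_A\ot v_{k_2}\ot\cdots\ot v_{k_{i-1}}\ot 1_A\ot \1\ot\omega^j \,\},
\end{align*}
where the two extreme slots carry $1_A$, the PBW slot carries the unit $\1\in\U(A)$, and the $v_{k_2},\dots,v_{k_{i-1}}$ together with $\omega^j$ range over basis elements. The claim is that applying $\Q(A)$-elements of the form $v_{i_1}\ot v'_{j_1}\#\arr{\al}$ to these generators recovers a full $\K$-basis of $Q_n$. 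Then I would verify that the action formula in Lemma \ref{proj} is indeed a well-defined associative $\Q(A)$-action by checking compatibility with the multiplication in $\Q(A)$ given in the Remark — this is where the cocommutativity of $\U(A)$ and the module-algebra structure on $A^e$ enter, guaranteeing that the coproduct splits $\arr{\al}$ coherently across slots.

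The main technical step is the \emph{linear-independence and spanning} argument, i.e.\ showing the assignment $\Q(A)\ot_{\K}\mathcal{B}_{i,j}\to A^{i+2}\ot\U(A)\ot\wedge^j$ is a $\K$-linear isomorphism. I would do this by a triangularity/filtration argument: order the PBW basis of $\U(A)$ by degree, and observe that when $\arr{\al}$ acts through the coproduct, the top-degree term (where $\al_{i+1}=\al$ and all other $\al_k=\varnothing$) contributes $v_{i_1}\ot v_{k_2}\ot\cdots\ot v_{k_{i-1}}\ot v_{j_1}\ot\arr{\al}\ot\omega^j$ with no derivation applied to the middle factors, while all lower-degree terms apply brackets $\arr{\al}_k(v_{k})$ and hence strictly lower the $\U(A)$-degree. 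This upper-triangularity with respect to the $\U(A)$-grading shows the leading terms already span, and the correction terms cannot cause collapse, giving both freeness and the explicit rank.

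The hard part will be bookkeeping the ordered-partition sum so that the triangularity claim is airtight: one must confirm that the leading (undifferentiated) term is genuinely unique and that the lower-order corrections stay within the span of already-accounted-for basis elements. Once the $\U(A)$-degree filtration is set up correctly this is routine, since each $\arr{\al}_k(v_k)\in A$ re-expands in the $\{v_i\}$ basis without raising $\U(A)$-degree; but stating it cleanly requires care with the empty-partition conventions and the fact that the extreme slots $v_1,v_i$ receive the bimodule multiplication rather than a pure derivation.
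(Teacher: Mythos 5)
Your proposal follows essentially the same route as the paper: first verify the module axiom $(uv)x=u(vx)$ by direct computation with the shuffle coproduct and the Leibniz rule, then exhibit the free basis $\{1_A\ot v_{k(1)}\ot\cdots\ot v_{k(i)}\ot 1_A\ot\1\ot\widetilde{\omega}\}$ and prove freeness by exactly the leading-term argument you describe, isolating the summand with $\al_{i+1}=\al$ (highest $\U(A)$-degree, no derivations applied) and noting all other terms strictly lower the $\U(A)$-degree. Only a cosmetic indexing slip: for the summand $A^{i+2}\ot\U(A)\ot\wedge^j$ your generators should carry $i$ middle factors, not $i-2$.
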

\begin{proof}[\textbf{Proof}]Firstly, we show that $Q_n$ is a left
$\Q(A)$-module. It suffices to check that the equality
\begin{equation*}
\begin{split}
&(a\ot b'\# \arr{\al})((c\ot d'\# \arr{\beta})
(v_{k(1)}\ot \cdots \ot v_{k(i)}\ot \arr{\gamma}\ot \omega^j))\\
=&((a\ot b'\# \arr{\al})(c\ot d'\# \arr{\beta})) (v_{k(1)}\ot \cdots
\ot v_{k(i)}\ot \arr{\gamma}\ot \omega^j)
\end{split}
\end{equation*} holds.
In fact,
\begin{equation*}
\begin{split}
LHS=&(a\ot b'\# \arr{\al})
((c\ot d'\# \arr\beta)(v_{k(1)}\ot \cdots \ot v_{k(i)}\ot \arr{\gamma}\ot \omega^j))\\
=&(a\ot b'\# \arr{\al}) \sum_{\beta=\beta_1\sqcup \cdots \sqcup
\beta_{i+1}} c\arr{\beta}_1(v_{k(1)}) \ot \cdots \ot
\arr{\beta}_i(v_{k(i)})d' \ot \arr{\beta}_{i+1}\arr{\gamma}
\ot \omega^j\\
=&\sum_{{\al=\al_1\sqcup \cdots\sqcup \al_{i+1}}\atop
{\beta=\beta_1\sqcup \cdots \sqcup \beta_{i+1}}}
a\arr{\al}_1(c\arr{\beta}_1(v_{k(1)})) \ot
\arr{\al}_2(\arr{\beta}_2(v_{k(2)}))
\ot \cdots\\
&\ot\arr{\al}_{k(i-1)}(\arr{\beta}_{i-1}(v_{k(i-1)})) \ot
\arr{\al}_i(\arr{\beta}_i(v_{k(i)})d')b' \ot
\arr{\al}_{i+1}\arr{\beta}_{i+1}\arr{\gamma} \ot \omega^j.
\end{split}
\end{equation*}
By the Leibniz rule $\{a,bc\}=\{a,b\}c+b\{a,c\}$, we have
$$\arr{\al}_1(c\arr{\beta}_1(v_{k(1)}))
=\sum_{\al=\xi_1\sqcup \xi_2} \arr{\xi}_1(c)
((\arr{\xi}_2\arr{\beta})(v_{k(1)})),$$
$$\arr{\al}_i(\arr{\beta}_i(v_{k(i)})d)
=\sum_{\al_i=\zeta_1\sqcup \zeta_2}((\arr{\zeta}_2\arr{\beta}_i)(v_{k(i)}))
\arr{\zeta}_1(d).$$
Hence,
\begin{equation*}
\begin{split}
LHS =& \sum_{{\al=(\xi_1\sqcup \xi_2)\sqcup \al_2\sqcup \cdots
\sqcup \al_{i-1} \sqcup(\zeta_1\sqcup \zeta_2)\sqcup \al_{i+1}}
\atop{\beta=\beta_1\sqcup \cdots \sqcup \beta_{i+1}}}
a\arr{\xi}_1(c)((\arr{\xi}_2\arr{\beta})(v_{k(1)})) \ot
\arr{\al}_2(\arr{\beta}_2(v_{k(2)}))
\ot \cdots\\
&\ot \arr{\al}_{i-1}(\arr{\beta}_{i-1}(v_{k(i-1)})) \ot
((\arr{\zeta}_2\arr{\beta}_i)(v_{k(i)})) \arr{\zeta}_1(d)b \ot
\arr{\al}_{i+1}\arr{\beta}_{i+1}\arr{\gamma} \ot \omega^j;
\end{split}
\end{equation*}
\begin{equation*}
\begin{split}
RHS=&((a\ot b'\# \arr{\al})((c\ot d'\# \beta))
(v_{k(1)}\ot \cdots \ot v_{k(i)}\ot \arr{\gamma}\ot \omega^j)\\
=&(\sum_{\al=\al_1\sqcup\al_2\sqcup \al_3}a \arr{\al_1}(c)\ot
(\arr{\al}_2(d)b)' \#\arr{\al}_3\arr{\beta}))(v_{k(1)}\ot \cdots \ot
v_{k(i)}\ot
\arr{\gamma}\ot \omega^j)\\
=&\sum_{{\al=\al_1\sqcup\al_2\sqcup \al_3}\atop
{\al_3\beta=\beta_1\sqcup\cdots\sqcup \beta_{i+1}}}
a\arr{\al_1}(c)\arr{\beta}_1(v_{k(1)}) \ot \arr{\beta}_2(v_{k(2)})
\ot \cdots \\
&\ot \arr{\beta}_{i-1}(v_{k(i-1)})\ot \arr{\beta}_i(v_{k(i)})
\arr{\al}_2(d)b \ot \arr{\beta}_{i+1}\arr{\gamma}\ot \omega^j.
\end{split}
\end{equation*}
From the significance of the notation $\sqcup$, we get
\[\sum_{\al_3\beta=\beta_1\sqcup\cdots\sqcup \beta_{i+1}}
=\sum_{{\al_3=\xi_1\sqcup \cdots \sqcup \xi_{i+1}}\atop
{\beta=\beta_1 \sqcup \cdots\sqcup \beta_{i+1}}}.\] Then we have
\begin{align*}
RHS =& \sum_{{\al=\al_1\sqcup \al_2\sqcup \xi_1\sqcup \cdots \sqcup
\xi_{i+1}}\atop {\beta=\beta_1 \sqcup \cdots\sqcup \beta_{i+1}}}
a\arr{\al_1}(c)(\arr{\xi}_1 \arr{\beta}_1(v_{k(1)})) \ot
\arr{\xi}_2\arr{\beta}_2(v_{k(2)})
\ot \cdots\\
&\ot \arr{\xi}_{i-1}\arr{\beta}_{i-1}(v_{k(i-1)}) \ot
\arr{\xi_i}\arr{\beta}_i(v_{k(i)})\arr{\al}_2(d)b
\#\arr{\xi_{i+1}}\arr{\beta}_{i+1}\arr{\gamma}\ot \omega^j).
\end{align*}
Comparing $LHS$ with $RHS$, we obtain the equality needed.

Next, we show that $Q_n$ is free
over $\Q(A)$ for each $n$. Set $Q_{ij}=A^{i+2}\ot \U(A)\ot \wedge^j$. We claim
that $Q_{ij}$ is a free $\Q(A)$-module with a basis \[\left\{1_A\ot v_{k(1)}\ot \cdots \ot v_{k(i)} \ot 1_A \ot \1
\ot v_{l(1)}\wedge \cdots \wedge v_{l(j)} \bigg | {\scriptstyle k(1),\cdots,k(i), l(1),\cdots, l(j)\in S \atop{\scriptstyle l(1)<\cdots <l(j), i,j\ge 0}}\right\}.\]

Notice that there exists a PBW-like basis of $\Q(A)$ given by
$v_{s}\ot v'_{t} \# \arr{\al}$, where $s,t\in S$ and $\arr{\al}$ is
a homogeneous element of degree $l$ in $\U(A)$. Following the
notations in \cite{YYY}, we write $\arr{\theta}=v_{k(1)}\ot \cdots
\ot v_{k(i)}$ if $\theta=(k(1),\cdots,k(i))\in S^i$, and
$\widetilde{\omega}=v_{l(1)}\wedge \cdots \wedge v_{l(j)}$ if
$\omega=(l(1),\cdots,l(j))$ with $l(1)<\cdots <l(j)$.

Assume that some $\Q(A)$-linear combination equals to zero, that is,
\[\sum\lambda_{s,t,\alpha,\theta,\omega}(v_{s}\ot v'_{t}\# \arr{\al})
(1_A\ot \arr{\theta}\ot 1_A\ot \1 \ot \widetilde{\omega})=0,\]
where each $v_{s}\ot v'_{t}\# \arr{\al}$ is chosen to be in the PBW-basis.
Let $\al$ be with highest degree which appears in the sum. Moreover, each term
in the left hand side is written as
\[(v_{s}\ot v'_{t}\# \arr{\al})
(1_A\ot \arr{\theta}\ot 1_A\ot \1 \ot \widetilde{\omega})
=\sum_{\al=\al_1\sqcup \al_2} v_{s} \ot \arr{\al_1}
(\arr{\theta})\ot v'_{t}\ot \arr{\al_2} \ot \widetilde{\omega},\]
where $\arr{\al_1}(\arr{\theta})
=\sum_{\al_1=\beta_1\sqcup \cdots \sqcup \beta_i}
\arr{\beta}_1(v_{k(1)})\ot \cdots \ot \arr{\beta}_i(v_{k(i)})$.

Combining those terms containing $\al$ in the resulting sum, we have
$$\sum\lambda_{s,t,\alpha,\theta,\omega} (v_{s}\ot \arr{\theta} \ot v'_{t} \ot \arr{\al}\ot \widetilde{\omega})=0.$$
Thus $\lambda_{s,t,\alpha,\theta,\omega}=0$ for any $s, t, \theta$ and
$\omega$, and our
claim follows.
\end{proof}

\begin{Rem}The corresponding quasi-Poisson action of $A$ on $A^{i}\ot
\U(A)\ot \wedge^j$ is given by
\begin{equation*}
\begin{split}
&\{a, v_1\ot \cdots \ot v_i\ot \arr{\beta}\ot
\omega^j\}_\ast\\
:=&\sum_{k=1}^i v_1\ot \cdots \ot \{a,v_k\} \ot \cdots\ot v_i
\ot \arr{\beta} \ot  \omega^j+v_1\ot \cdots \ot v_i \ot (\arr{a}\cdot \arr{\beta})\ot \omega^j
\end{split}
\end{equation*}
for all $a\in A$, $v_1\ot \cdots \ot v_i\in A^{i},  \arr{\beta}\in \U(A)$ and $
\omega^j\in \wedge^j$.
\end{Rem}

\begin{lem}\label{morph}
The morphisms $\varphi_n$ $(n=0,1,2,\cdots)$ in the total complex \eqref{delete-quasi-res} are
$\Q(A)$-homomorphisms.
\end{lem}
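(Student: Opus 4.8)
The plan is to verify that each $\vp_n$ commutes with the $\Q(A)$-action described in Lemma \ref{proj}. The first observation is that this action preserves the bigrading: acting on an element of $Q_{ij}=A^{i+2}\ot\U(A)\ot\wedge^j$ leaves the number $i$ of $A$-tensor factors and the exterior degree $j$ unchanged, since the factor $\omega^j$ is never touched. As $\vp_n=\bigoplus_{i+j=n}(\delta_i\ot\id+(-1)^i\id\ot d_j)$ sends $Q_{ij}$ into $Q_{i-1,j}\oplus Q_{i,j-1}$ and the sign $(-1)^i$ is irrelevant to linearity, it suffices to show that the two families of maps $\delta_i\ot\id$ and $\id\ot d_j$ are each $\Q(A)$-linear. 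Moreover, because the action is associative (Lemma \ref{proj}) and $\vp_n$ is $\K$-linear, it is enough to check linearity on a set of algebra generators of $\Q(A)$, namely the left multiplications $a\ot 1_A'\#\1$, the right multiplications $1_A\ot a'\#\1$, and the Lie actions $1_A\ot 1_A'\# v$ for $a,v\in A$.

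For the two multiplication generators the verification is immediate: $\delta_i$ is a homomorphism of $A$-$A$-bimodules, this being built into the bar resolution $\mathbb{S}$, while $\id\ot d_j$ acts as the identity on the $A$-factors and so trivially commutes with left and right multiplication. It remains to treat the Lie generator $1_A\ot 1_A'\# v$. By the formula of Lemma \ref{proj}, its action splits as $L_v=L_v^{(A)}+L_v^{(\U)}$, where $L_v^{(A)}$ applies the derivation $\{v,-\}$ to each $A$-tensor factor in turn and $L_v^{(\U)}$ left-multiplies the $\U(A)$-factor by $v$, the exterior factor being untouched. Two of the four resulting cross-terms vanish for formal reasons: $L_v^{(\U)}$ commutes with $\delta_i\ot\id$ because the two maps act on disjoint tensor factors, and likewise $L_v^{(A)}$ commutes with $\id\ot d_j$ since $L_v^{(A)}$ affects only the $A^{i+2}$ part whereas $\id\ot d_j$ affects only $\U(A)\ot\wedge^j$. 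A third case, $L_v^{(\U)}$ against $\id\ot d_j$, reduces to the fact that the Koszul differential $d_j$ is a homomorphism of left $\U(A)$-modules, i.e. $d_j(v\arr{\gamma}\ot\omega^j)=v\,d_j(\arr{\gamma}\ot\omega^j)$, which is clear from its defining formula.

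The one genuinely substantive case, and the main obstacle, is the commutation of $\delta_i$ with $L_v^{(A)}$. Here one must compare $\delta_i$ applied after distributing the derivation $\{v,-\}$ over all tensor slots with $\{v,-\}$ distributed after the alternating sum of multiplications $\sum_k(-1)^k(\cdots\ot v_kv_{k+1}\ot\cdots)$ defining $\delta_i$. The two agree precisely because of the Poisson Leibniz rule $\{v,ab\}=\{v,a\}b+a\{v,b\}$, which splits each term in which the derivation meets a product into exactly the two terms produced by the other order; the signs match slot by slot, and the boundary multiplications are handled by re-expressing the outcome in the free-module normal form of Lemma \ref{proj}. Equivalently, one may package all three Lie subcases at once by working on the full space $A^{i+2}\ot\U(A)\ot\wedge^j$ and invoking that the bar complex is a complex of $\U(A)$-modules for the coproduct action, using the module-algebra identity $\arr{\al}(ab)=\sum_{\al=\al_1\sqcup\al_2}\arr{\al}_1(a)\arr{\al}_2(b)$. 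The bookkeeping with the shuffle coproduct and the ordered partitions is the only delicate point; once the Leibniz splitting is matched term by term, the desired equality $\vp_n(x\cdot m)=x\cdot\vp_n(m)$ follows.
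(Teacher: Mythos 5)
Your proof is correct and follows essentially the same route as the paper: reduce to the summands $\delta_i\ot\id$ and $\id\ot d_j$, dispose of the $A^e$-generators via the bimodule structure of the bar resolution, and handle the Lie part via the Leibniz rule (for $\delta_i$) and the left $\U(A)$-linearity of the Koszul differential (for $d_j$). The only difference is organizational: you verify the Lie action on the degree-one generators $1_A\ot 1'_A\# v$ and split it as $L_v^{(A)}+L_v^{(\U)}$ with four cross-terms, which avoids the shuffle-coproduct bookkeeping the paper carries out for a general PBW monomial $\arr{\al}$, but both verifications rest on the same module-algebra identity.
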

\begin{proof}[\textbf{Proof}] Clearly, each $\vp_n$ in
$\mathbb{Q}'$
is a direct sum of $\begin{pmatrix} \delta_i\ot \id \\
(-1)^i\id\ot d_j \end{pmatrix}$ by definition. It suffices
to show that $\delta_i\ot \id$ and $\id\ot d_j$ are homomorphisms of
$\Q(A)$-modules. Firstly, $\delta_i\ot \id$ and $\id\ot d_j$ are
$A^e$-homomorphisms and hence
$$(\delta_i\ot \id)((a\ot b'\# \1)x)=(a\ot b'\# \1)(\delta_i\ot \id)(x),$$
$$(\id\ot d_j)((a\ot b'\# \1)x)=(a\ot b'\# \1)(\id\ot d_j)(x),$$
for all $x\in A^{i+2}\ot\U(A)\ot \wedge^j$.

On the other hand,
for any $1_A\ot 1'_A\#\arr{\al}\in \Q(A)$,
{\footnotesize
\begin{equation*}
\begin{split}
&(\delta_i\ot \id)((1_A\ot 1'_A\#\arr{\al})(v_0\ot \cdots \ot v_{i+1} \ot \arr{\beta} \ot \omega^j))\\
=& (\delta_i\ot \id)(\sum_{\al=\al_0\sqcup \cdots \sqcup \al_{i+2}}
\arr{\al}_0(v_0)\ot \cdots \ot \arr{\al}_{i+1}(v_{i+1})\ot \arr{\al}_{i+2}\cdot \arr{\beta} \ot \omega^j)\\
=&\sum_{{\al=\al_0\sqcup \cdots \sqcup \al_{i+2}}\atop{0\le k\le
i}}\!\!\!\!\!\! (-1)^k \arr{\al}_0(v_0)\ot \cdots\ot
\arr{\al}_k(v_k)\arr{\al}_{k+1}(v_{k+1}) \ot\cdots
\ot \arr{\al}_{i+1}(v_{i+1})\ot \arr{\al}_{i+2}\cdot \arr{\beta} \ot \omega^j\\
=&\sum_{{\al=\al_0\sqcup \cdots \sqcup \al_{i+1}} \atop {0\le k\le i}}\!\!\!\!\!\!  (-1)^k
\arr{\al}_0(v_0)\ot \cdots\ot \arr{\al}_k(v_k v_{k+1}) \ot \arr{\al}_{k+1}(v_{k+2}) \ot\cdots
\ot \arr{\al}_{i}(v_{i+1})\ot \arr{\al}_{i+1}\cdot \arr{\beta} \ot \omega^j\\
=& (1_A\ot 1'_A\#\arr{\al})(\sum_{0\le k\le i}(-1)^k v_0\ot \cdots
\ot v_kv_{k+1}\ot \cdots \ot v_{i+1}
\ot \arr{\beta}\ot \omega^j)\\
=&(1_A\ot 1'_A\#\arr{\al})((\delta_i\ot \id)(v_0\ot \cdots \ot
v_{i+1} \ot \arr{\beta} \ot \omega^j))
\end{split}
\end{equation*}}
By the definition of $d_j$, it is easy to check that
\begin{equation*}
\begin{split}
&(\id\ot d_j)((1_A\ot 1'_A\#\arr{\al})(v_0\ot \cdots \ot v_{i+1} \ot \arr{\beta} \ot \omega^j))\\
=& (1_A\ot 1'_A\#\arr{\al})((\id\ot d_j)(v_0\ot \cdots \ot v_{i+1}
\ot \arr{\beta} \ot \omega^j)).
\end{split}
\end{equation*}
Since $a\ot b'\#\1, 1_A\ot 1'_A\# \arr{\al}$  generate $\Q(A)$, it follows that $\delta_i\ot \id$ and $\id\ot d_j$ are
$\Q(A)$-homomorphisms.
\end{proof}

\begin{lem} \label{exactness}
Keeping the above notations, we have
$$\H_0(\mathbb{Q}')\cong A, ~~\mbox{\rm and}~~\H_n(\mathbb{Q}')=0, n\ge 1.$$
\end{lem}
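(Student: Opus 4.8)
The plan is to read off the homology of $\mathbb{Q}'$ from that of its two tensor factors by means of the Künneth theorem, using crucially that everything in sight is a complex of $\K$-vector spaces. First I would record the homology of each factor separately. The complex $\mathbb{S}$ is the bar resolution of $A$ over $A^e$, so it is exact with augmentation $A$; deleting the term $A$ leaves $\mathbb{S}'$, whose homology is therefore concentrated in degree $0$ with $\H_0(\mathbb{S}')\cong A$ (the cokernel of $\delta_1$, identified with $A$ through the multiplication map $\delta_0$) and $\H_p(\mathbb{S}')=0$ for $p\ge 1$. Likewise $\mathbb{C}$ is the Koszul resolution of the trivial module $\K$ over $\U(A)$, so the deleted complex $\mathbb{C}'$ satisfies $\H_0(\mathbb{C}')\cong \K$ and $\H_q(\mathbb{C}')=0$ for $q\ge 1$.

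Next, since $\K$ is a field, every $\K$-module is flat and all $\mathrm{Tor}$-corrections vanish, so the Künneth theorem applies to the complexes of $\K$-spaces $\mathbb{S}'$ and $\mathbb{C}'$ and yields a natural isomorphism
\[
\H_n(\mathbb{Q}')=\H_n(\mathrm{Tot}(\mathbb{S}'\ot\mathbb{C}'))\cong\bigoplus_{p+q=n}\H_p(\mathbb{S}')\ot\H_q(\mathbb{C}').
\]
There is no convergence issue, because the double complex $\mathbb{S}'\ot\mathbb{C}'$ lies in the first quadrant and hence every total degree is a finite direct sum. Substituting the computations above, the only surviving summand is the one with $p=q=0$, whence $\H_0(\mathbb{Q}')\cong A\ot_\K\K\cong A$ and $\H_n(\mathbb{Q}')=0$ for all $n\ge 1$, as claimed.

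The vanishing is thus a formal consequence of Künneth, and the only point that I would treat with a little care is the nature of the isomorphism $\H_0(\mathbb{Q}')\cong A$: since this lemma is the homological input to the construction of a free $\Q(A)$-resolution of $A$, the identification in degree $0$ ought to be $\Q(A)$-linear. I would make it explicit through the augmentation $\mu\colon Q_0=A\ot A\ot\U(A)\to A$, $a\ot b\ot\arr{\al}\mapsto\epsilon(\arr{\al})\,ab$, which combines the multiplication $\delta_0$ with the counit $\epsilon$; that $\mu$ is a $\Q(A)$-homomorphism follows from the same bookkeeping with ordered partitions as in Lemma \ref{morph}, and one checks $\Im(\varphi_1)=\Ker(\mu)$ so that $\mu$ descends to the asserted isomorphism of $\Q(A)$-modules. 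If one prefers to avoid invoking Künneth, the identical conclusion follows by running the spectral sequence of the double complex: taking homology first in the Koszul direction collapses the $E^1$-page, because each row $A^{i+2}\ot\mathbb{C}'$ is exact (as $A^{i+2}$ is $\K$-free) except in column $0$, leaving the single column $\mathbb{S}'$, whose homology in the bar direction is $A$ in degree $0$ and zero elsewhere. The only genuine obstacle is the mild $\Q(A)$-linear bookkeeping for $\H_0$; the rest is standard homological algebra.
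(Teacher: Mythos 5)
Your proof is correct and follows essentially the same route as the paper: both compute $\H_*(\mathbb{Q}')$ by applying the K\"unneth theorem to the total complex of $\mathbb{S}'\ot\mathbb{C}'$, using that each factor has homology concentrated in degree $0$ (equal to $A$ and $\K$ respectively) and that over a field no $\mathrm{Tor}$ terms arise. Your additional remarks on the $\Q(A)$-linearity of the degree-$0$ identification and the spectral-sequence alternative are sound but not needed for the statement as given.
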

\begin{proof} [\textbf{Proof}] By K\"unneth's theorem (see \cite{HS}), it is easy to see that $\mathbb{Q}'$ is
exact at $Q_n$ for each $n\ge 1$, since both $\mathbb{S}'$ and $\mathbb{C}'$ are exact for $i,j>0$,
and $\mathbb{Q}'$ is the total complex of $\mathbb{S}'\ot \mathbb{C}'$.

For $n=0$, again by K\"unneth's theorem,
\[H_0(\mathbb{Q}')
\cong H_0(\mathbb{S}')\ot H_0(\mathbb{C}')=A\ot \K\cong A.\]
\end{proof}

Combining Lemma \ref{proj}, Lemma \ref{morph} and Lemma \ref{exactness}, we
obtain a projective resolution of $A$ as a $\Q(A)$-module.

\begin{thm} \label{proj.resol}
Let $A$ be a Poisson algebra, $\Q(A)$ the quasi-Poisson
enveloping algebra of $A$, and $\varphi_0\colon Q_0 \to A $ the $\Q(A)$-homomorphism given by
$\varphi_0(a_0\ot a_1\ot \arr{\al})=\epsilon(\arr{\al})a_0a_1$.
Then the sequence $\mathbb{Q}'$ together with the map $\varphi_0$, say
 \begin{equation}\label{quasi-res}
    \mathbb{Q}\colon\
\cdots \to Q_n\xr{\varphi_n} Q_{n-1}\to \cdots \to
Q_1\xr{\varphi_1} Q_0\xr{\varphi_0} A\to 0,
\end{equation}
is a free resolution of $A$ as a $\Q(A)$-module.
\end{thm}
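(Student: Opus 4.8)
The plan is to extract the four defining properties of a free resolution from the three lemmas already at hand, so that the only genuinely new work concerns the augmentation $\varphi_0$. First I would observe that every $Q_n$ is a free (hence projective) $\Q(A)$-module by Lemma \ref{proj}, and that every $\varphi_n$ with $n\ge 1$ is a $\Q(A)$-homomorphism by Lemma \ref{morph}. Since $\mathbb{Q}$ and $\mathbb{Q}'$ agree in all homological degrees $\ge 1$, exactness of $\mathbb{Q}$ at each $Q_n$ with $n\ge 1$ is literally the vanishing $\H_n(\mathbb{Q}')=0$ provided by Lemma \ref{exactness}. Thus only the bottom of the complex needs attention: I must show that $\varphi_0$ is a $\Q(A)$-map and that $\mathbb{Q}$ is exact at $Q_0$ and at $A$.

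For the $\Q(A)$-linearity of $\varphi_0$ I would proceed as in Lemma \ref{morph}, testing against the algebra generators $a\ot b'\# \1$ and $1_A\ot 1_A'\# \arr{\al}$ of $\Q(A)$. Writing an element of $Q_0=A^2\ot\U(A)$ as $a_0\ot a_1\ot \arr{\gamma}$, the action of Lemma \ref{proj} specializes to $(a\ot b'\# \arr{\al})(a_0\ot a_1\ot \arr{\gamma})=\sum_{\al=\al_1\sqcup\al_2\sqcup\al_3} a\,\arr{\al}_1(a_0)\ot \arr{\al}_2(a_1)\,b\ot \arr{\al}_3\arr{\gamma}$. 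Applying $\varphi_0$ and using that $\epsilon$ is an algebra map vanishing on all elements of positive degree forces $\al_3=\varnothing$, leaving $\epsilon(\arr{\gamma})\sum_{\al=\al_1\sqcup\al_2} a\,\arr{\al}_1(a_0)\arr{\al}_2(a_1)\,b$. Because $A$ is a $\U(A)$-module algebra (i.e.\ $\U(A)$ acts by derivations), the shuffle-coproduct identity collapses $\sum_{\al=\al_1\sqcup\al_2}\arr{\al}_1(a_0)\arr{\al}_2(a_1)=\arr{\al}(a_0a_1)$, so the value equals $\epsilon(\arr{\gamma})\,a\,\arr{\al}(a_0a_1)\,b$. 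This is exactly $(a\ot b'\# \arr{\al})\cdot \varphi_0(a_0\ot a_1\ot \arr{\gamma})$ for the quasi-Poisson $\Q(A)$-structure on $A$, whence $\varphi_0$ is a $\Q(A)$-homomorphism.

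For exactness at the bottom, surjectivity of $\varphi_0$ onto $A$ is immediate from $\varphi_0(a\ot 1_A\ot \1)=a$. Next I would verify $\varphi_0\varphi_1=0$ on the two summands of $Q_1$: on $A^3\ot\U(A)$ the map $\delta_1\ot\id$ yields the telescoping $a_0a_1\ot a_2-a_0\ot a_1a_2$, which $\varphi_0$ kills, while on $A^2\ot\U(A)\ot\wedge^1$ the map $\id\ot d_1$ multiplies a degree-one element of $A=\wedge^1$ into $\U(A)$, which $\epsilon$ annihilates. Hence $\Im\varphi_1\subseteq\Ker\varphi_0$ and $\varphi_0$ descends to a surjection $\bar{\varphi}_0\colon Q_0/\Im\varphi_1\to A$. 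By Lemma \ref{exactness} the source is $\H_0(\mathbb{Q}')\cong A$, and tracing the K\"unneth isomorphism shows this identification is induced by $\mu\ot\epsilon=\varphi_0$, where $\mu$ denotes multiplication; therefore $\bar{\varphi}_0$ is an isomorphism and $\Ker\varphi_0=\Im\varphi_1$. Assembling the four points gives that $\mathbb{Q}$ is a free resolution of $A$ over $\Q(A)$.

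The step demanding the most care is the $\Q(A)$-linearity of $\varphi_0$, specifically the bookkeeping that turns the sum over ordered tripartitions $\al=\al_1\sqcup\al_2\sqcup\al_3$ into a single derivation action $\arr{\al}(a_0a_1)$ once $\epsilon$ forces $\al_3=\varnothing$; this is where the module-algebra (Leibniz) structure of $A^e$ under $\U(A)$ enters essentially. A secondary point worth stating explicitly is the identification of the induced map $\bar{\varphi}_0$ with the K\"unneth isomorphism, which guarantees that the abstract isomorphism $\H_0(\mathbb{Q}')\cong A$ is genuinely realized by the concrete augmentation $\varphi_0$ rather than merely coinciding with it up to an unspecified automorphism of $A$.
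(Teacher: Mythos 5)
Your proposal is correct and follows the same route as the paper, which simply combines Lemmas \ref{proj}, \ref{morph} and \ref{exactness}; the extra work you do on the augmentation (checking $\Q(A)$-linearity of $\varphi_0$ via the module-algebra identity $\sum_{\al=\al_1\sqcup\al_2}\arr{\al}_1(a_0)\arr{\al}_2(a_1)=\arr{\al}(a_0a_1)$, and identifying $\bar\varphi_0$ with the K\"unneth isomorphism) fills in details the paper leaves implicit. All of these verifications check out.
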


Let $M$ be a left $\Q(A)$-module and hence a quasi-Poisson module
over $A$. Applying the functor $\Hom_{\Q(A)}(-,M)$ to the deleted
complex $\mathbb{Q}'$, we
obtain a complex $\Hom_{\Q(A)}(\mathbb{Q}', M)$:
\begin{equation*}
\begin{split}
0\to&  \Hom_{\Q(A)}(Q_0,M)\to \Hom_{\Q(A)}(Q_1,M)\to
\Hom_{\Q(A)}(Q_2,M)
\to \cdots\\
\to& \Hom_{\Q(A)}(Q_n, M)\to \Hom_{\Q(A)}(Q_{n+1}, M)\to \cdots.
\end{split}
\end{equation*}
By Theorem \ref{proj.resol}, the $n$-th quasi-Poisson cohomology group is calculated by
$$\HQ^n(A,M)=\Ext_{\Q(A)}^n(A,M)=H^n \Hom_{\Q(A)}(\mathbb{Q}', M).$$

\subsection{Quasi-Poisson complex}

To compute the quasi-Poisson cohomology groups, one uses a simplified complex, the so-called
\emph{quasi-Poisson complex}.
Let $M$ be a quasi-Poisson module.
Applying the functor $\Hom_{\Q(A)}(-, M)$ to the bicomplex $\mathbb{S}'\ot \mathbb{C}'$, we obtain
{\footnotesize
\[\begin{CD}
&&  \cdots          &&         \cdots                &&    \\
 &&  @AAA                  @AAA                   \\
0 @>>> \Hom_{\Q(A)}(A^4\ot \U(A),M) @>>> \Hom_{\Q(A)}(A^4\ot \U(A)\ot \wedge^1,M) @>>>  \cdots\\
 &&  @AAA                  @AAA                   \\
0 @>>> \Hom_{\Q(A)}(A^3\ot \U(A),M) @>>> \Hom_{\Q(A)}(A^3\ot \U(A)\ot \wedge^1,M) @>>>  \cdots\\
 &&  @AAA                  @AAA                   \\
0 @>>> \Hom_{\Q(A)}(A^2\ot \U(A),M) @>>> \Hom_{\Q(A)}(A^2\ot \U(A)\ot \wedge^1,M) @>>>  \cdots\\
&&  @AAA                  @AAA                   \\
  &&  0          &&         0
\end{CD}\]
}
Following from the natural $\K$-isomorphisms
$$\mathrm{\Phi}_{i,j}\colon\ \Hom_{\Q(A)}(A^{i+2}\ot \U(A)\ot \wedge^j, M)\xr{\simeq }\Hom(A^i\ot \wedge^j, M),$$
$$\mathrm{\Phi}_{i,j}(f)((a_1\ot \cdots \ot a_i) \ot (x_1\wedge \cdots \wedge x_j))= f(1_A \ot (a_1\ot \cdots \ot a_i)\ot 1_A \ot \1 \ot (x_1\wedge \cdots \wedge x_j)),$$
 the above bicomplex is isomorphic to the bicomplex $\Hom(A^\bullet \ot \wedge^\bullet, M)$:
{\footnotesize
\[\begin{CD}
&& \cdots  && \cdots   && \cdots \\
&&@AAA @AAA @AAA\\
0 @>>> \Hom(A^2,M) @>{\sigma_H^{1,0}}>> \Hom(A^2\ot \wedge^1, M) @>{\sigma_H^{1,1}}>> \Hom(A^2\ot \wedge^2,M)@>>> \cdots\\
&&@A{\sigma_V^{1,0}}AA @A{\sigma_V^{1,1}}AA @A{\sigma_V^{1,2}}AA\\
0 @>>> \Hom(A,M) @>{\sigma_H^{1,0}}>> \Hom(A\ot \wedge^1, M) @>{\sigma_H^{1,1}}>> \Hom(A\ot \wedge^2,M)@>>> \cdots\\
&&@A{\sigma_V^{0,0}}AA @A{\sigma_V^{0,1}}AA @A{\sigma_V^{0,2}}AA\\
0 @>>> M @>{\sigma_H^{0,0}}>> \Hom(\wedge^1, M) @>{\sigma_H^{0,1}}>> \Hom(\wedge^2,M)@>>> \cdots\\
&&@AAA @AAA @AAA\\
&& 0 && 0&& 0
\end{CD}\]
}
where
\begin{align*}
&(\sigma_V^{i,j}(f))((a_1\ot \cdots \ot a_{i+1}) \ot (x_1\wedge\cdots \wedge x_j))\\
= &\ a_1 f ((a_2\ot \cdots \ot a_i) \ot (x_1\wedge \cdots \wedge x_j))\\
&+\sum_{k=1}^{i}(-1)^k f ((a_1\ot \cdots \ot a_ka_{k+1}\ot \cdots \ot a_{i+1}) \ot (x_1\wedge \cdots \wedge x_j))\\
&+(-1)^{i+1}f((a_1\ot \cdots \ot a_{i})\ot (x_1\wedge \cdots \wedge x_j))a_{i+1},
\end{align*}
\begin{align*}
&(\sigma_H^{i,j}(f))((a_1\ot \cdots \ot a_{i}) \ot (x_1\wedge\cdots \wedge x_{j+1}))\\
= & \sum_{l=1}^{j+1}(-1)^{l+1}
 \bigg[
 \{x_l, f((a_1\ot \cdots \ot a_i) \ot(x_1\wedge \cdots \widehat{x}_l\cdots \wedge x_{j+1}))\}_\ast
\\
&~~~~~~~~~~~~~~~~~~~
-\sum_{t=1}^i f ((a_1\ot \cdots \ot \{x_l,a_t\}\ot\cdots\ot a_i) \ot (x_1\wedge \cdots  \widehat{x}_l\cdots \wedge x_{j+1}))
\bigg]\\
&+\sum_{1\le p<q\le {j+1}}(-1)^{p+q}f ((a_1\ot \cdots \ot a_i) \ot
(\{x_p,x_q\}\wedge x_1\wedge\cdots\widehat{x}_p\cdots \widehat{x}_q
\cdots \wedge x_{j+1}))
\end{align*}
for all $f \in \Hom(A^{i}\ot \wedge^j,A)$, and $(a_1\ot \cdots a_i)\ot (x_1\wedge \cdots\wedge x_j) \in A^i\ot \wedge^j$, $i,j\ge 0$.

\begin{Rem}\label{rem-hoch-kos-cpx}
Write $\delta^n=\sigma_V^{n,0}$ and $d^n=\sigma_H^{0,n}$ for each $n\ge 0$. Clearly,
the leftmost row $(\Hom(A^\bullet, M), \delta^n)$ is nothing but the Hochschild complex $\HC(A, M)$
(see \cite{Ha1,Ho}),
and the bottom column $ \LC(A, M) = (\Hom(\wedge^\bullet, M), d^n)$ calculates the Lie algebra cohomology $\Ext_{\U(A)}^*(\K, M)$.

Let $\HH^n(A, M)$ denote the $n$-th Hochshcild cohomology of $A$ with coefficients in the $A$-bimodule $M$, and $\HL^n(A, M)= \Ext_{\U(A)}^n(\K, M)$
the $n$-th Lie algebra cohomology of the Lie algebra $A$ with coefficients in the Lie module $M$. Thus $\HH^n(A, M)= H^n(\HC(A, M))$ and $\HL^n(A,M)=H^n(\LC(A, M))$.
\end{Rem}

\begin{Def}\label{quasi-Poisson complex}
Let $A$ be a Poisson algebra and $M$ a quasi-Poisson module. The total
complex of $\Hom(A^\bullet \ot \wedge^\bullet, M)$, say
\begin{align*}
0\to&  M\xr{\sigma^0} \Hom_{\K}(A\oplus \wedge^1, M)\xr{\sigma^1}
\Hom_{\K}(A^{2}\oplus A\ot \wedge^1 \oplus \wedge^2, M)
\xr{\sigma^2} \cdots \\
\to& \Hom_{\K} (\bigoplus_{i+j=n}A^{i}\ot \wedge^j, M)
\xr{\sigma^{n}} \Hom_{\K}(\bigoplus_{i+j=n+1}A^{i}\ot \wedge^j, M)\to \cdots,
\end{align*}
\[\sigma^n =\bigoplus\limits_{i+j=n} \left(\sigma_V^{i,j} +(-1)^i  \sigma_H^{i,j}\right)\ \forall\ n\ge0,\]
is called the \emph{quasi-Poisson complex} of $A$ with
coefficients in $M$, and denoted by $\mathrm{QC}(A, M)$.
\end{Def}

An immediate consequence follows.
\begin{prop} \label{equ-complex}
The quasi-Poisson complex  $\mathrm{QC}(A, M)$ is isomorphic to
the complex $\Hom_{\Q(A)}(\mathbb{Q}',M)$, and hence
$\H^n(\mathrm{QC}(A, M))=\HQ^n(A,M)$ for all $n\ge 0$.
\end{prop}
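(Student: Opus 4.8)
The plan is to obtain the isomorphism by totalizing an isomorphism of the two bicomplexes in play. Since $\mathbb{Q}'=\mathrm{Tot}(\mathbb{S}'\ot\mathbb{C}')$ and $\Hom_{\Q(A)}(-,M)$ is additive and contravariant, the complex $\Hom_{\Q(A)}(\mathbb{Q}',M)$ is nothing but the total complex of the bicomplex $\Hom_{\Q(A)}(\mathbb{S}'\ot\mathbb{C}',M)$; dualizing $\varphi_n=\bigoplus_{i+j=n}(\delta_i\ot\id+(-1)^i\id\ot d_j)$ produces a differential that splits as $\bigoplus(\Hom_{\Q(A)}(\delta_i\ot\id,M)+(-1)^i\Hom_{\Q(A)}(\id\ot d_j,M))$. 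On the other side, $\mathrm{QC}(A,M)$ is by Definition \ref{quasi-Poisson complex} the total complex of $\Hom(A^\bullet\ot\wedge^\bullet,M)$ with differential $\sigma^n=\bigoplus(\sigma_V^{i,j}+(-1)^i\sigma_H^{i,j})$, carrying the same sign twist. Hence it suffices to show that the family $\{\Phi_{i,j}\}$ is an isomorphism of bicomplexes carrying $\Hom_{\Q(A)}(\delta_i\ot\id,M)$ to $\sigma_V^{i,j}$ and $\Hom_{\Q(A)}(\id\ot d_j,M)$ to $\sigma_H^{i,j}$; totalizing then gives the claimed isomorphism on the nose.

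That each $\Phi_{i,j}$ is bijective is the standard free-module adjunction: by Lemma \ref{proj} the module $A^{i+2}\ot\U(A)\ot\wedge^j$ is $\Q(A)$-free on the generators $1_A\ot a_1\ot\cdots\ot a_i\ot 1_A\ot\1\ot\widetilde\omega$, so $\Phi_{i,j}$ records a homomorphism by its values there. Compatibility with the vertical differential is then routine: evaluating $\Hom_{\Q(A)}(\delta_{i+1}\ot\id,M)(f)$ on a generator applies $f$ to $\delta_{i+1}(1_A\ot a_1\ot\cdots\ot a_{i+1}\ot 1_A)\ot\1\ot\widetilde\omega$, and in the two extreme summands of the bar differential the absorbed outer $1_A$ is pulled back out of $f$ via the actions $(a_1\ot 1_A'\#\1)$ and $(1_A\ot a_{i+1}'\#\1)$, producing $a_1 f(\cdots)$ and $(-1)^{i+1}f(\cdots)a_{i+1}$, while the interior merges $a_ka_{k+1}$ stay inside. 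This reproduces $\sigma_V^{i,j}$ exactly.

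The horizontal differential is the main obstacle, since here the $\U(A)$-part of the $\Q(A)$-action intervenes. Applying $\id\ot d_{j+1}$ to a generator, the double sum over $p<q$ is already in generator form and contributes the $\{x_p,x_q\}$-term of $\sigma_H^{i,j}$. The remaining summands have the shape $(-1)^{l+1}\,1_A\ot a_1\ot\cdots\ot a_i\ot 1_A\ot x_l\ot\widetilde\omega'$, where $\widetilde\omega'=x_1\wedge\cdots\widehat{x}_l\cdots\wedge x_{j+1}$ and $x_l$ is the degree-one element sitting in the $\U(A)$-slot, so they are not generators. To reduce them I would act by $1_A\ot 1_A'\# x_l$; by the action rule of Lemma \ref{proj} a degree-one element distributes over all tensor factors by the Leibniz rule, and since $\{x_l,1_A\}=0$ on the two outer slots one gets
\[(1_A\ot 1_A'\# x_l)(1_A\ot a_1\ot\cdots\ot a_i\ot 1_A\ot\1\ot\widetilde\omega') =1_A\ot a_1\ot\cdots\ot a_i\ot 1_A\ot x_l\ot\widetilde\omega' +\sum_{t=1}^i 1_A\ot\cdots\ot\{x_l,a_t\}\ot\cdots\ot 1_A\ot\1\ot\widetilde\omega'.\]
Solving for the non-generator term and applying the $\Q(A)$-linear $f$ turns the first summand into $\{x_l,f(\cdots)\}_\ast$—the action of $1_A\ot 1_A'\# x_l$ on $M$—and the correction terms into $-\sum_t f(\cdots\{x_l,a_t\}\cdots)$; weighting by $(-1)^{l+1}$ and summing over $l$ yields precisely the bracketed expression in $\sigma_H^{i,j}$.

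With both compatibilities in hand, $\{\Phi_{i,j}\}$ is an isomorphism of bicomplexes, and because the totalization signs on the two sides coincide it induces an isomorphism of total complexes $\Hom_{\Q(A)}(\mathbb{Q}',M)\cong\mathrm{QC}(A,M)$. Isomorphic complexes have identical cohomology, and by Theorem \ref{proj.resol} the left-hand complex computes $\Ext^n_{\Q(A)}(A,M)=\HQ^n(A,M)$; therefore $\H^n(\mathrm{QC}(A,M))=\HQ^n(A,M)$ for all $n\ge 0$, as claimed.
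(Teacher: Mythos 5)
Your proposal is correct and follows exactly the route the paper intends: the paper treats the proposition as an immediate consequence of the isomorphisms $\Phi_{i,j}$ between the two bicomplexes, and you simply supply the verification (freeness from Lemma \ref{proj}, the pull-out of the outer factors for $\sigma_V^{i,j}$, and the rewriting of $1_A\ot a_1\ot\cdots\ot a_i\ot 1_A\ot x_l\ot\widetilde\omega'$ via the action of $1_A\ot 1_A'\# x_l$ for $\sigma_H^{i,j}$) that the paper leaves implicit. No gaps; the sign bookkeeping between $\varphi_n$ and $\sigma^n$ also matches as you claim.
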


\subsection{Lower-dimensional quasi-Poisson cohomologies}

First examples are lower dimensional quasi-Poisson cohomology groups of a
Poisson algebra $(A,\cdot,\{-,-\})$. We denote by $Z(A)$ and $Z\{A\}$ the center of the associative algebra
and the one of the Lie algebra, respectively. Then we have the following easy result.

\begin{prop}\label{hq0} Keep the above notation. Then $\HQ^0(A)=Z(A)\cap Z\{A\}$.
\end{prop}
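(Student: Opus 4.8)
The plan is to compute $\HQ^0(A) = \Ext^0_{\Q(A)}(A,A) = \Hom_{\Q(A)}(A,A)$ directly via the quasi-Poisson complex established in Proposition \ref{equ-complex}, since $\H^0(\QC(A,A)) = \HQ^0(A)$. Looking at the complex $\QC(A,A)$, the zeroth term is simply $M = A$ (with $M=A$ here), so $\HQ^0(A) = \Ker \sigma^0$, where $\sigma^0 = \sigma_V^{0,0} + \sigma_H^{0,0}$. Thus the entire computation reduces to identifying which elements $a \in A$ are killed by both $\sigma_V^{0,0}$ and $\sigma_H^{0,0}$.

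First I would write out the two maps explicitly on an element $a \in A = \Hom(A^0 \ot \wedge^0, A)$. From the formula for $\sigma_V^{i,j}$ with $i=j=0$, the map $\sigma_V^{0,0}\colon A \to \Hom(A, A)$ sends $a$ to the function $b \mapsto ab - ba$ (the terms being $a_1 f(\cdots) + (-1)^{0+1} f(\cdots)a_1$, which reads $b\cdot a - a \cdot b$ up to sign convention); hence its kernel is exactly the set of $a$ commuting with every element of $A$, i.e. the associative center $Z(A)$. Next, from the formula for $\sigma_H^{i,j}$ with $i=0, j=0$, the map $\sigma_H^{0,0}\colon A \to \Hom(\wedge^1, A)$ sends $a$ to the function $x \mapsto \{x, a\}_\ast = \{x,a\}$ (using that on $A$ the quasi-Poisson action is the Lie bracket); its kernel is the set of $a$ with $\{x,a\}=0$ for all $x$, i.e. the Lie center $Z\{A\}$.

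Then I would simply intersect: since $\sigma^0 = \sigma_V^{0,0} + \sigma_H^{0,0}$ maps into a direct sum $\Hom(A \oplus \wedge^1, A) = \Hom(A,A) \oplus \Hom(\wedge^1, A)$, an element lies in $\Ker \sigma^0$ precisely when it lies in both $\Ker \sigma_V^{0,0}$ and $\Ker \sigma_H^{0,0}$. Combining the two kernel identifications gives $\HQ^0(A) = Z(A) \cap Z\{A\}$, as claimed.

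This proof is essentially a matter of unwinding definitions, so there is no deep obstacle. The only point requiring minor care is matching the sign conventions in $\sigma_V^{0,0}$ and confirming that the two components of $\sigma^0$ land in a genuine direct sum so that the kernel is the intersection rather than something larger; I would verify that $\sigma^0(a) = 0$ forces \emph{both} components to vanish, which is immediate from the direct-sum structure of the target $\Hom_\K(A \oplus \wedge^1, A)$ in the quasi-Poisson complex. Everything else follows formally from Proposition \ref{equ-complex} and the explicit differentials recorded just before Definition \ref{quasi-Poisson complex}.
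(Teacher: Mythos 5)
Your proof is correct and follows exactly the route the paper intends: the paper gives no explicit proof but records immediately afterwards that $\sigma^0=(\mathrm{ad},\mathrm{ad}_L)$, so $\HQ^0(A)=\Ker\sigma^0=\Ker(\mathrm{ad})\cap\Ker(\mathrm{ad}_L)=Z(A)\cap Z\{A\}$, which is precisely your computation of the two components $\sigma_V^{0,0}$ and $\sigma_H^{0,0}$. The only cosmetic discrepancy is that the paper's formula gives $b\mapsto ba-ab$ rather than $ab-ba$, which does not affect the kernel.
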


Denote by $\Der(A)$ and $\Der_L(A)$ the $\K$-space of associative derivations and the space of Lie derivations respectively.
Consider the maps $\mathrm{ad}\colon A\to \Der(A)$ and $\mathrm{ad}_L\colon A\to \Der_L(A)$ given by $\mathrm{ad}(a)= [-,a]$
and $\mathrm{ad}_L(a)=\{-,a\}$ for all $a\in A$. Under these notations, the differential $\sigma^0=(\mathrm{ad}, \mathrm{ad}_L)$.

Moreover, for any $f=(f_1,f_0)\in \Ker\sigma^1$, by Proposition \ref{equ-complex}, we
know that $f_1\in\Der(A)$ and $f_0\in{\Der_L(A)}$ and the equality
\begin{equation}\label{1-qp-cohomology}f_1(\{x,a\})-\{x,f_1(a)\}_\ast=f_0(x)a-af_0(x)\end{equation}
holds for any $(a,x)\in A\oplus \wedge^1$. Now set
 \[D(A)=\{(f_1,f_0)\in \Der(A)\oplus \Der_L(A)\mid (\ref{1-qp-cohomology}) \text{ holds for all } a, x \in A\}.\]
Thus $\HQ^1(A) $ is computed as follows by definition.

\begin{prop} Keep the above notations.
 Then we have $\HQ^1=D(A)/\Im \sigma^0$ and hence \[\dim_{\K}\HQ^1(A) = \dim_{\K}D(A) - \dim_{\K} A + \dim_\K\HQ^0(A).\]
\end{prop}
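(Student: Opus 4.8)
The plan is to compute the degree-one cohomology of the quasi-Poisson complex directly from its bottom corner. By Proposition \ref{equ-complex} we have $\HQ^1(A)=\H^1(\mathrm{QC}(A,A))$, and since $\mathrm{QC}(A,A)$ is the cochain complex $0\to A\xr{\sigma^0}\Hom(A\oplus\wedge^1,A)\xr{\sigma^1}\cdots$ with $M=A$ placed in degree zero, by definition $\H^1=\Ker\sigma^1/\Im\sigma^0$. Thus it suffices to identify $\Ker\sigma^1$ with $D(A)$ and then carry out an elementary dimension count.

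First I would unwind $\sigma^1$. Writing $f=(f_1,f_0)$ with $f_1\in\Hom(A,A)$ sitting in the $(i,j)=(1,0)$ spot of the bicomplex and $f_0\in\Hom(\wedge^1,A)$ in the $(0,1)$ spot, the formula $\sigma^1=\bigoplus_{i+j=1}(\sigma_V^{i,j}+(-1)^i\sigma_H^{i,j})$ produces a cochain with three homogeneous components, landing in the spots $(2,0)$, $(1,1)$ and $(0,2)$. Explicitly the $(2,0)$-component is $\sigma_V^{1,0}(f_1)$, the $(0,2)$-component is $\sigma_H^{0,1}(f_0)$, and the $(1,1)$-component is $-\sigma_H^{1,0}(f_1)+\sigma_V^{0,1}(f_0)$. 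Evaluating the stated formulas for $\sigma_V$ and $\sigma_H$ on the relevant basis elements shows that the vanishing of these three components is equivalent, respectively, to the Hochschild relation $f_1(ab)=af_1(b)+f_1(a)b$ (so $f_1\in\Der(A)$); the Lie-cocycle relation $f_0(\{x,y\})=\{x,f_0(y)\}_\ast-\{y,f_0(x)\}_\ast$ (so $f_0\in\Der_L(A)$); and precisely the compatibility equation (\ref{1-qp-cohomology}). These are exactly the three conditions defining $D(A)$, whence $\Ker\sigma^1=D(A)$ and therefore $\HQ^1(A)=D(A)/\Im\sigma^0$.

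For the dimension formula I would apply the rank–nullity theorem to $\sigma^0\colon A\to\Hom(A\oplus\wedge^1,A)$. Since the complex begins in degree zero with $M=A$, we have $\HQ^0(A)=\Ker\sigma^0$, and hence $\dim_\K\Im\sigma^0=\dim_\K A-\dim_\K\HQ^0(A)$. Combining this with the short exact sequence $0\to\Im\sigma^0\to D(A)\to\HQ^1(A)\to0$, which gives $\dim_\K\HQ^1(A)=\dim_\K D(A)-\dim_\K\Im\sigma^0$, yields $\dim_\K\HQ^1(A)=\dim_\K D(A)-\dim_\K A+\dim_\K\HQ^0(A)$, as claimed.

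The only genuine work lies in the middle step: verifying that the $(1,1)$-component reproduces (\ref{1-qp-cohomology}) with the correct signs. This is exactly what the discussion preceding the statement carries out for the forward inclusion $\Ker\sigma^1\subseteq D(A)$; the reverse inclusion is then immediate, since the three defining conditions of $D(A)$ are, term by term, the vanishing of the three components of $\sigma^1(f_1,f_0)$. The concluding dimension count is purely formal and is to be read in the setting where all spaces involved are finite-dimensional.
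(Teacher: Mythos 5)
Your proposal is correct and follows essentially the same route as the paper: the paper's argument is precisely the preceding identification of $\Ker\sigma^1$ with $D(A)$ via the three components of $\sigma^1(f_1,f_0)$, followed by the rank--nullity count using $\HQ^0(A)=\Ker\sigma^0$. Your write-up just makes explicit the component-by-component verification that the paper leaves as "by definition."
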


\section{Examples}
\subsection{Standard Poisson algebras}

Let $A$ be an associative algebra. For any $a,b\in A$, we denote by
$[a,b]$ the commutator $ab-ba$ of $a$ and $b$.
Then $(A,\cdot, \la[-,-])$ is a Poisson algebra for a fixed $\la\in \K$, and we call it a \emph{standard Poisson algebra}.
By Proposition \ref{hq0}, we have $\HQ^0(A)=Z(A)$.

More generally, $\HQ^0(A)=Z(A)$ for any inner Poisson algebra since $Z(A)\subset Z\{A\}$ in this case,
see Lemma 1.1 in \cite{YYZ} for more details. Recall that a Poisson algebra $(A,\cdot,\{-,-\})$
is said to be inner if the Hamilton operator ${\rm ham}(a):=\{a,-\}$ is an inner derivation of $(A,\cdot)$ for any $a\in A$.

Now we turn to $\HQ^1$. Given $f_1\in\Der(A)$ and $f_0\in{\Der_L(A)}$. Note that in standard case, the equality $(\ref{1-qp-cohomology})$ is equivalent to
\[\Im(f_0-f_1)\in Z(A),\]
which holds true if and only if $f_1=f_0+g$ for some  Lie derivation $g$ satisfying $\Im g \subseteq Z(A)$.
Since $g([x,y])=[g(x),y]+[x,g(y)]$, we have $\Ker(g)\supseteq [A,A]$, thus $g$ is
obtained from some $\widetilde{g}\in\Hom(A/[A,A], Z(A))$. Conversely, each $\widetilde{g}\in\Hom(A/[A,A], Z(A))$
gives to a Lie derivation $g$ with $\Im g \subseteq Z(A)$. Thus we have the following characterization.

\begin{lem} Let $A$ be a standard Poisson algebra. Then \[\HQ^1(A)\cong \HH^1(A)\oplus \Hom(A/[A,A], Z(A)),\]
where $\HH^*(A)=\HH^*(A,A)$ is the Hochschild cohomology with coefficients of $A$ in itself.
\end{lem}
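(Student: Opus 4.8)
The plan is to combine the identification $\HQ^1(A)=D(A)/\Im\sigma^0$ from the preceding proposition with the explicit descriptions of $D(A)$ and of $\sigma^0$ in the standard case, and then to read off the direct sum by a single linear change of variables.

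First I would record the shape of $\sigma^0$ here. Taking the bracket to be $\{-,-\}=[-,-]$, the two components of $\sigma^0=(\mathrm{ad},\mathrm{ad}_L)$ coincide, since $\mathrm{ad}_L(a)=\{-,a\}=[-,a]=\mathrm{ad}(a)$; hence
\[\Im\sigma^0=\{(\mathrm{ad}(a),\mathrm{ad}(a))\mid a\in A\}\]
is the diagonal copy of the space of inner derivations $\mathrm{Inn}(A)=\Im(\mathrm{ad})$. This coincidence of the two coordinates, peculiar to the standard case, is what ultimately forces the clean splitting; in a general Poisson algebra $\Im\sigma^0$ would meet both coordinates nontrivially and the decomposition would break down.

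Next I would introduce the space $G(A)=\{g\in\Der_L(A)\mid \Im g\subseteq Z(A)\}$ together with the $\K$-linear map
\[\Psi\colon D(A)\longrightarrow \Der(A)\oplus G(A),\qquad (f_1,f_0)\longmapsto (f_1,\,f_1-f_0).\]
By the discussion preceding the lemma, a pair $(f_1,f_0)\in\Der(A)\oplus\Der_L(A)$ satisfies (\ref{1-qp-cohomology}) precisely when $g:=f_1-f_0$ is a Lie derivation with $\Im g\subseteq Z(A)$; thus $\Psi$ is well defined, has inverse $(f_1,g)\mapsto(f_1,f_1-g)$, and is therefore an isomorphism of $\K$-spaces. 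Under $\Psi$ the subspace $\Im\sigma^0$ goes to $\{(\mathrm{ad}(a),0)\mid a\in A\}=\mathrm{Inn}(A)\oplus 0$, because $\mathrm{ad}(a)-\mathrm{ad}(a)=0$.

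Finally I would pass to the quotient. Invoking $\HH^1(A)=\Der(A)/\mathrm{Inn}(A)$ and the isomorphism $G(A)\cong\Hom(A/[A,A],Z(A))$, $g\mapsto\widetilde g$, already established above, I obtain
\[\HQ^1(A)=D(A)/\Im\sigma^0\cong\big(\Der(A)\oplus G(A)\big)/\big(\mathrm{Inn}(A)\oplus 0\big)\cong\HH^1(A)\oplus\Hom(A/[A,A],Z(A)),\]
which is the asserted isomorphism. The one point demanding care — and the main obstacle — is checking that $\Psi$ is a genuine linear isomorphism that carries $\Im\sigma^0$ exactly onto $\mathrm{Inn}(A)\oplus 0$; once the second coordinate of $\Im\sigma^0$ is seen to vanish, the quotient splits off the two advertised summands with no further interaction.
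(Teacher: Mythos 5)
Your proof is correct and follows essentially the same route as the paper: the equivalence of condition (3.4) with $f_1=f_0+g$ for a Lie derivation $g$ with $\Im g\subseteq Z(A)$, and the identification of such $g$ with $\Hom(A/[A,A],Z(A))$, is exactly the paper's argument. Your only addition is to make explicit, via the change of variables $(f_1,f_0)\mapsto(f_1,f_1-f_0)$, that $\Im\sigma^0$ is carried onto the inner derivations in the first summand — a bookkeeping step the paper leaves implicit, and which you handle correctly.
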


In general, quasi-Poisson cohomology groups of higher degrees are difficult to compute,
only some special cases are known to us.

\begin{ex} Let $A$ be the $\K$-algebra of upper triangular $2\times 2$ matrices. It is known to be the
path algebra of the quiver of $\mathbb{A}_2$ type. More explicitly, $A$ has a basis $\{e_1, e_2, \alpha\}$, and the multiplication is given by $e_ie_j=\delta_{ij}e_i$, $\alpha e_1=e_2\alpha=0$  and $e_1\alpha=\alpha e_2=\alpha$,
where $\delta_{ij}$ is the Kronecker sign. Clearly $1_A=e_1+e_2$.

Consider the standard Poisson algebra. By direct computation, one shows that  as a graded algebra, $\HQ^*(A)\cong \K\langle x,y\rangle/\langle x^2, y^2, xy+yx\rangle$, the exterior algebra in 2 variables. The grading is given by $\deg(x)=\deg(y)=1$.
\end{ex}

\begin{ex}  Consider the standard Poisson algebra of  $A=\mathbb{M}_2(\K)$, the $\K$-algebra of $2\times 2$ matrices. Again direct calculation shows that $$\HQ^0(A)=\HQ^1(A)=\HQ^3(A)=\HQ^4(A)=\K,$$ and $\HQ^i=0$ for $i\ne 0,1,3,4$. In fact, as a graded algebra, $$\HQ^*(A)\cong \K\langle x,y\rangle/\langle x^2, y^2, xy+yx\rangle,$$
where the grading is given by $\deg(x) = 1$ and $\deg(y)=3$.
\end{ex}

\subsection{Poisson algebras with trivial Lie bracket}

Let $(A,\cdot,\{-,-\})$ be a finite-dimensional Poisson algebra with trivial Lie
structure, i.e. $\{a,b\}=0$ for any $a,b\in A$. Clearly, $\Q(A)=A\otimes A^\op\otimes \U(A)$ and $\U(A)\cong \mathcal{S}(A)$, where
$\mathcal{S}(A)$ is the polynomial algebra of the vector space $A$.

One shows easily that as a $\Q(A)$-module, $A$ is the tensor product of the $A^e$-module $A$ and the trivial Lie module $\K$ over $A$.
Then by a classical result in homological algebra,  $\HQ^*(A)\cong \HH^*(A)\otimes \Ext^*_{\mathcal{S}(A)}(\K,\K)$; see for instance, Theorem 3.1 in \cite{CE}, Chapter XI. By Koszul duality,
$\Ext^*_{\mathcal{S}(A)}(\K,\K)\cong \wedge A$, the exterior algebra of the vector space $A$.
Thus we have the following result.

\begin{prop}\label{prop-trivial-bracket}Let $(A,\cdot,\{-,-\})$ be a finite-dimensional Poisson algebra with trivial Lie
bracket. Then
$\HQ^*(A)\cong \HH^*(A)\otimes \wedge A$.
\end{prop}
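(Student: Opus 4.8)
The plan is to exploit the fact that a trivial Lie bracket forces the smash product $\Q(A)=A^e\#\U(A)$ to degenerate into an ordinary tensor product of algebras. First I would note that the $\U(A)$-action on $A^e$ is defined through iterated brackets $\arr{\al}(a)=\{v_{i(1)},\{\cdots,\{v_{i(r)},a\}\cdots\}\}$, so when $\{-,-\}=0$ every $\arr{\al}$ of positive degree acts as zero and only $\1$ acts, as the identity. Consequently the multiplication formula for $\Q(A)$ in the Remark collapses, yielding an algebra isomorphism $\Q(A)\cong A^e\ot\mathcal{S}(A)$, where $\U(A)\cong\mathcal{S}(A)$ is the polynomial algebra on $A$. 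Under this identification I would check that the quasi-Poisson module $A$ becomes $A\ot\K$ as a $\Q(A)$-module, with $A^e$ acting on the left tensorand as the regular bimodule and $\mathcal{S}(A)$ acting on the right tensorand through its augmentation; indeed the Lie action $\{a,m\}_\ast=\{a,m\}=0$ is precisely the statement that the augmentation ideal annihilates $\K$.

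Next I would compute $\HQ^*(A)=\Ext^*_{\Q(A)}(A,A)$ via the Künneth formula for $\Ext$ over a tensor product of algebras (\cite[Chapter~XI, Theorem~3.1]{CE}). The cleanest route is to reuse the free resolution $\mathbb{Q}$ of Theorem~\ref{proj.resol}: for trivial bracket its $\Q(A)$-module structure is exactly the external tensor product of the $A^e$-module structure on the bar resolution $\mathbb{S}'$ and the $\mathcal{S}(A)$-module structure on the Koszul resolution $\mathbb{C}'$, since $\mathbb{Q}'=\mathrm{Tot}(\mathbb{S}'\ot\mathbb{C}')$ and the two factors no longer interact. Applying $\Hom_{\Q(A)}(-,A\ot\K)$ and using that $\Hom$ commutes with $\ot$ on finitely generated projectives, the resulting complex factors as $\Hom_{A^e}(\mathbb{S}',A)\ot\Hom_{\mathcal{S}(A)}(\mathbb{C}',\K)$, and the Künneth theorem over the field $\K$ (no $\mathrm{Tor}$ correction terms arise) gives
\[\HQ^*(A)\cong\HH^*(A)\ot\Ext^*_{\mathcal{S}(A)}(\K,\K).\]

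It then remains to identify $\Ext^*_{\mathcal{S}(A)}(\K,\K)$ with $\wedge A$, which I would do directly from the Koszul resolution $\mathbb{C}'$ rather than quoting Koszul duality as a black box. Applying $\Hom_{\mathcal{S}(A)}(-,\K)$ sends $\U(A)\ot\wedge^j$ to $\Hom_\K(\wedge^j,\K)\cong\wedge^j A$; in the differential $d_j$ the bracket terms vanish by triviality of $\{-,-\}$, while the surviving terms push a degree-one generator into the $\U(A)=\mathcal{S}(A)$ factor, which then acts on $\K$ through the augmentation and hence as zero. Thus every induced differential is zero and $\Ext^j_{\mathcal{S}(A)}(\K,\K)\cong\wedge^j A$, whence $\Ext^*_{\mathcal{S}(A)}(\K,\K)\cong\wedge A$ and therefore $\HQ^*(A)\cong\HH^*(A)\ot\wedge A$.

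The step I expect to be the main obstacle is not any single computation but the rigorous justification of the Künneth isomorphism, and this is exactly where finite-dimensionality enters: commuting $\Hom$ past $\ot$ requires each resolution to consist of finitely generated projectives, which holds here because $A$ is finite-dimensional, so each $A^{i+2}$ is a finite-rank free $A^e$-module and the Koszul complex has finite length $\dim_\K A$ with finite-rank free terms. A secondary point to verify is multiplicativity: the external product on $\Ext$ of the tensor-product algebra matches the tensor product of the Yoneda product on $\HH^*(A)$ with the cup product on $\wedge A$, so that the isomorphism holds at the level of graded algebras and not merely graded vector spaces.
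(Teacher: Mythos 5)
Your proposal is correct and follows essentially the same route as the paper: observe that the trivial bracket collapses $\Q(A)$ into the tensor product $A^e\otimes\mathcal{S}(A)$ with $A\cong A\otimes\K$ as a module, apply the K\"unneth-type theorem of Cartan--Eilenberg (Chapter XI, Theorem 3.1), and identify $\Ext^*_{\mathcal{S}(A)}(\K,\K)$ with $\wedge A$. The only differences are that you supply more detail than the paper does --- justifying the K\"unneth step via the explicit resolution of Theorem \ref{proj.resol} and computing $\Ext^*_{\mathcal{S}(A)}(\K,\K)$ directly from the degenerate Koszul complex rather than quoting Koszul duality --- which is a harmless elaboration, not a different argument.
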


\subsection{Poisson algebras with finite Hochschild cohomology dimension}

Let $(A, \cdot, \{-,-\})$ be a Poisson algebra. Suppose the associative algebra $A$ has finite Hochschild cohomology dimension, that is, the $n$-th Hochschild cohomology group
of $(A,\cdot)$ vanishes for sufficiently large $n$.

\begin{prop}\label{prop-f-hhdim}
Let $(A, \cdot, \{-,-\})$ be a Poisson algebra and $k$ a fixed positive integer.
Suppose $\HH^n(A)=0$ for all $n> k$.
Set $\Omega_n^{k}= \Hom(\bigoplus\limits_{i+j =n, i\le k } A^i\otimes \wedge^j, A)$.
Then the $n$-th quasi-Poisson cohomology group
$$\HQ^n(A)=\frac{\Ker\sigma^n\cap \Omega_n^{k}}{\Im\sigma^{n-1}\cap \Omega_n^{k}}.$$
\end{prop}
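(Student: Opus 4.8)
The plan is to exploit the double-complex structure behind $\QC(A,A)$. Recall from Definition \ref{quasi-Poisson complex} that $\QC(A,A)$ is the total complex of $\Hom(A^\bullet\ot\wedge^\bullet,A)$, whose vertical differentials are the Hochschild-type maps $\sigma_V^{i,j}$ and whose horizontal differentials are the Lie-type maps $\sigma_H^{i,j}$. I would filter the total complex by the Hochschild degree $i$ (the columns, for fixed $j$) and show that the resulting cohomology is concentrated in the strip $i\le k$; the explicit subquotient formula then follows by comparing the truncated and the full complexes.

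The first and decisive step is to compute the cohomology of each column. For fixed $j$ the column is $\bigl(\Hom(A^i\ot\wedge^j,A),\sigma_V^{i,j}\bigr)_{i\ge0}$, and since the formula for $\sigma_V^{i,j}$ leaves the $\wedge^j$-slot untouched and acts on the remaining slots precisely by the Hochschild differential, the natural identifications
\[\Hom(A^i\ot\wedge^j,A)\cong\Hom(\wedge^j,\Hom(A^i,A))\]
turn this column into $\Hom(\wedge^j,\HC(A,A))$. As $\K$ is a field, $\Hom(\wedge^j,-)$ is exact and therefore commutes with cohomology, so the $i$-th column cohomology is $\Hom(\wedge^j,\HH^i(A))$. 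By hypothesis $\HH^i(A)=0$ for $i>k$, hence every column is exact in Hochschild degrees $>k$. This is the only place the assumption $\HH^n(A)=0$ for $n>k$ is used, and notice it requires no finite-dimensionality of $A$.

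It remains to translate this vanishing into the stated formula. The inclusion $\Omega_n^k\hookrightarrow\bigoplus_{i+j=n}\Hom(A^i\ot\wedge^j,A)$ induces a map $\Ker\sigma^n\cap\Omega_n^k\to\HQ^n(A)$ whose kernel is exactly $\Im\sigma^{n-1}\cap\Omega_n^k$ (because $\Im\sigma^{n-1}\subseteq\Ker\sigma^n$), giving a canonical injection
\[\frac{\Ker\sigma^n\cap\Omega_n^k}{\Im\sigma^{n-1}\cap\Omega_n^k}\hookrightarrow\HQ^n(A).\]
For surjectivity I would argue that every cocycle $z=\sum_{i+j=n}z_{i,j}$ is cohomologous to one lying in $\Omega_n^k$: if $i_0>k$ is the largest index with $z_{i_0,n-i_0}\ne0$, then the bidegree $(i_0+1,n-i_0)$ component of $\sigma^n z=0$ reads $\sigma_V^{i_0,n-i_0}z_{i_0,n-i_0}=0$ (the $\sigma_H$-contribution vanishes since $z_{i_0+1,\cdot}=0$), so $z_{i_0,n-i_0}$ is a column cocycle in Hochschild degree $i_0>k$ and hence a column coboundary $\sigma_V^{i_0-1,n-i_0}w$. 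Subtracting $\sigma^{n-1}(w)$, with $w$ placed in bidegree $(i_0-1,n-i_0)$, changes only the components in bidegrees $(i_0,n-i_0)$ and $(i_0-1,n-i_0+1)$ and cancels the former, so the maximal Hochschild degree occurring in the cocycle strictly drops. Iterating brings $z$ into $\Omega_n^k$ after finitely many steps, which proves surjectivity and hence the proposition.

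The main obstacle is the bookkeeping in this last reduction: one must verify carefully that the correction term affects only the two indicated bidegrees, so that the process genuinely terminates inside $i\le k$ and every intermediate element remains a cocycle cohomologous to $z$. Equivalently, this is the assertion that the spectral sequence of the column filtration degenerates into the strip $i\le k$; the explicit descent above is simply an unwinding of that convergence.
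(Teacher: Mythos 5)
Your proposal is correct and follows essentially the same route as the paper: both arguments reduce a total cocycle $z=\sum z_{i,j}$ by repeatedly observing that its top Hochschild-degree component is a vertical ($\sigma_V$) cocycle, writing it as a vertical coboundary using $\HH^i(A)=0$ for $i>k$, and subtracting the corresponding total coboundary until the cocycle lies in $\Omega_n^k$, with the kernel computation giving injectivity. Your identification $\Hom(A^i\ot\wedge^j,A)\cong\Hom(\wedge^j,\Hom(A^i,A))$ together with exactness of $\Hom(\wedge^j,-)$ is in fact a cleaner justification of the column-exactness step than the paper's tensor-factorization $f_{n-1}\mapsto f'_{n-1}\ot f''_1$, but the overall argument is the same.
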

\begin{proof}[\textbf{Proof}] To compute the quasi-Poisson cohomology, again we use the quasi-Poisson complex $\QC(A, A)$.
Consider the $\K$-linear map $\pi\colon \Ker \sigma^n \cap \Omega_n^k \to \HQ^n(A)$,
$f\mapsto f+\Im \sigma^{n-1}$.
Suppose $f=(f_n,\cdots,f_1,f_0)\in \Ker \sigma^n$ for some $n>k$.
By definition
$f_n$ is an $n$-th cocycle in the Hochschild complex,  and
hence there exists some $g_{n-1}\in \Hom(A^{n-1},A)$ such that
$f_n=\delta^{n-1}g_{n-1}$ since the $n$-th Hochschild cohomology group vanishes, where $\delta^n$ is the $\K$-linear map in
the Hochschild complex. Clearly,  $\overline{f}=\overline{f-\sigma^n g}\in \HQ^n(A)$. Thus,
$$f-\sigma^n g=(0,\widetilde{f_{n-1}}, f_{n-2},\cdots,f_0),$$
For brevity, we still denote $\widetilde{f_{n-1}}$ by $f_{n-1}$. Therefore
\begin{equation*}
\begin{split}
& a_1f_{n-1}(a_2\ot \cdots\ot a_{n}\ot x)\\
&+\sum_{k=1}^{n-1}f_{n-1}(a_1\ot \cdots \ot a_{k}a_{k+1} \ot \cdots \ot a_n\ot x)\\
&+(-1)^{n+1}f_{n-1}(a_1\ot \cdots \ot a_{n-1}\ot x)a_n=0
\end{split}
\end{equation*}
If $n-1>k$, consider the $\K$-linear isomorphism $\Hom(A^{n-1}\ot \wedge^1, A)\to \Hom(A^{n-1}, A)\ot A^\ast$
such that $f_{n-1}\mapsto f'_{n-1}\ot f''_1$, where $A^\ast$ is the dual $\K$-vector space of $\wedge^1=A$.
Clearly, we have $f'_{n-1}\in \Ker \delta^{n-1}$.
By the assumption $\HH^{n-1}(A)=0$, there exists
$g'_{n-2}\in \Hom(A^{n-2},A)$ such that
$f'_{n-1}=\delta^{n-2}(g'_{n-2})$.
Suppose $g_{n-2}=g'_{n-2}\ot f''_1\in \Hom(A^{n-2},A)\ot A^\ast\cong \Hom(A^{n-2}\ot \wedge^1,A)$ and
$g=(0, g_{n-2},0,\cdots,0)$, then we have $f_{n-1}=(\sigma^{n-1}(g))_{n-1}$ and
$\overline{f}=\overline{f-\sigma^{n-1}(g)}\in \HQ^{n-1}(A)$. Clearly,
$$f-\sigma^{n-1}(g)=(0,0,\widetilde{f_{n-2}},f_{n-3},\cdots,f_0).$$
Denote again $\widetilde{f_{n-2}}=f_{n-2}$.

Repeat the above argument, we know that each $f\in \HQ^n(A)$ can be written as
\[\overline{f}=\overline{(0,\cdots, 0, f_{k},\cdots,f_0)}.\]

Therefore, the $\K$-homomorphism $\pi$ is surjective. Clearly, $\Ker \pi=\Omega_n^k \cap \Im \sigma^{n-1}$, and hence
$$\HQ^n(A)=\frac{\Ker\sigma^n\cap \Omega_n^k}{\Im\sigma^{n-1}\cap \Omega_n^k}.$$
\end{proof}

\section{A Grothendieck spectral sequence for quasi-Poisson cohomology}
In this section, we  construct a Grothendieck spectral
sequence for smash product algebras, and apply it to the calculation of extensions of quasi-Poisson modules.
As a special case, this Grothendieck spectral sequence
exhibit a close relation among the quasi-Poisson cohomology,
the Hochschild cohomology and the Lie algebra cohomology.

We begin with a general situation. Let $H$ be a Hopf algebra over $\K$ with
the comultiplication $\Delta$ and the antipode $S$. let $A$ be a module algebra
over $H$ and $A\# H$ be the smash product.
If $M,N$ are modules over $A\# H$, then $\Hom_A(M,N)$ is an $H$-module
with the action given by $(hf)(x)=\sum h_2f(S^{-1}h_1x)$
for $x\in M$, where we use the Sweedler's sum notation, see \cite{Sw}.
It is easy to show the natural isomorphism $\Hom_H(\K,\Hom_A(M,N))\cong \Hom_{A\# H}(M,N)$.
Thus, we have the following well-known lemma
which is crucial in our calculation.

\begin{lem}
Keep the above notations. Then we have the natural isomorphism of bifunctors
\[\Hom_H(\K,\Hom_A(-,-))\cong \Hom_{A\# H}(-,-).\]
\end{lem}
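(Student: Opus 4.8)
The plan is to reduce the statement to the elementary fact that, for any left $H$-module $V$, the invariants are represented by the trivial module: $\Hom_H(\K,V)\cong V^H:=\{v\in V\mid hv=\varepsilon(h)v\ \text{for all }h\in H\}$, the isomorphism sending an $H$-linear map $\phi$ to $\phi(1_\K)$ (naturality in $V$ is clear). Applying this to $V=\Hom_A(M,N)$ with the given action, the left-hand bifunctor becomes the space of $A$-linear maps $f$ satisfying $\sum h_2\,f(S^{-1}h_1\,x)=\varepsilon(h)f(x)$ for all $h\in H$ and $x\in M$. For the right-hand side, since $A\#H$ is generated by its subalgebras $A\cong A\#1$ and $H\cong 1\#H$, with $(a\#1)(1\#h)=a\#h$ and action $(a\#h)m=a(hm)$, a $\K$-linear map $M\to N$ is $A\#H$-linear exactly when it is simultaneously $A$-linear and $H$-linear. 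Thus the whole lemma reduces to the claim that an $A$-linear map $f$ is $H$-invariant for this action if and only if it is $H$-linear.

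The easy inclusion is that $H$-linearity forces invariance. Assuming $f(hx)=hf(x)$, I would pull $S^{-1}h_1$ out through $f$ and collapse with the antipode identity $\sum h_2\,S^{-1}(h_1)=\varepsilon(h)1$ (available because $S$ is bijective, as the action already uses $S^{-1}$), giving $\sum h_2\,f(S^{-1}h_1\,x)=\big(\sum h_2\,S^{-1}(h_1)\big)f(x)=\varepsilon(h)f(x)$.

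The converse is where the real work lies, and it is the step I expect to be the main obstacle: a direct Sweedler manipulation of the $S^{-1}$-form of invariance tends to be circular. My plan is first to put invariance into a friendlier shape. Substituting $h\mapsto S(h)$ and using that $S$ reverses the coproduct, $\Delta(S(h))=\sum S(h_2)\ot S(h_1)$, together with $\varepsilon\circ S=\varepsilon$ and $S^{-1}\circ S=\id$, the relation $\sum h_2\,f(S^{-1}h_1\,x)=\varepsilon(h)f(x)$ turns into
\[\sum S(h_1)\,f(h_2\,x)=\varepsilon(h)f(x).\]
To then recover $H$-linearity I would compute $gf(x)=\sum g_1\,\varepsilon(g_2)f(x)$, replace $\varepsilon(g_2)f(x)$ by $\sum S((g_2)_1)f((g_2)_2\,x)$ via the rewritten invariance applied with $h=g_2$, and use coassociativity to reach $\sum g_1\,S(g_2)\,f(g_3\,x)$. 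Finally I would collapse this with the identity
\[\sum g_1\,S(g_2)\ot g_3=1\ot g,\]
which follows from $\sum a_1\,S(a_2)=\varepsilon(a)1$ after one comultiplication, obtaining $f(gx)$. Hence $gf(x)=f(gx)$, so $f$ is $H$-linear, and the two subspaces of $\Hom_A(M,N)$ coincide.

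It remains to promote this identification to an isomorphism of bifunctors. For $A\#H$-module maps $u\colon M'\to M$ and $w\colon N\to N'$, I would check that both the invariants identification $\Hom_H(\K,-)\cong(-)^H$ and the passage from an invariant $A$-linear map to the associated $A\#H$-linear map commute with precomposition by $u$ and postcomposition by $w$. These compatibilities are immediate from the definitions, so the pointwise isomorphisms assemble into the asserted natural isomorphism $\Hom_H(\K,\Hom_A(-,-))\cong\Hom_{A\#H}(-,-)$.
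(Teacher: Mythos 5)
Your proof is correct and follows essentially the same route as the paper: the isomorphism is exactly the paper's map $(\1\mapsto f)\mapsto f$, and your identification of $\Hom_H(\K,-)$ with invariants plus the antipode computations simply carries out the ``routine check'' that the paper omits. In particular, your verification that invariance under the action $(hf)(x)=\sum h_2f(S^{-1}h_1x)$ is equivalent to $H$-linearity is accurate.
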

\begin{proof}[\textbf{Proof}]
For any $A\# H$-modules $X, Y$, the natural isomorphism
$$\Hom_H(\K,\Hom_A(X,Y))\xrightarrow{\simeq} \Hom_{A\# H}(X,Y)$$ is given by
$(\1 \mapsto f)\mapsto f$. The only left is routine check and we omit here.
\end{proof}

Applying the Grothendieck spectral sequence \cite[Thoerem 10.47]{Ro},
we obtain a spectral sequence for a smash product algebra.
This spectral sequence should be well known to experts, although we could not
find any reference for it.

\begin{thm}
Keep the above notations.
Then we have a spectral sequence
\[\Ext_H^q(\K,\Ext_A^p(M,N))\Longrightarrow \Ext_{A\# H}^{p+q}(M,N).\]
\end{thm}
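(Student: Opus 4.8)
The plan is to invoke the Grothendieck spectral sequence for a composite of functors, using the isomorphism of bifunctors established in the preceding lemma as the bridge. The final statement asserts that for $A\#H$-modules $M, N$, there is a spectral sequence
\[
\Ext_H^q(\K,\Ext_A^p(M,N))\Longrightarrow \Ext_{A\#H}^{p+q}(M,N).
\]
First I would fix $M$ and regard everything as a functor in the second variable $N$, so that we are dealing with the composite of two functors. The inner functor is $G=\Hom_A(M,-)$, which by the observation preceding the lemma lands not merely in $\K$-vector spaces but in the category of $H$-modules (via the action $(hf)(x)=\sum h_2 f(S^{-1}h_1 x)$). The outer functor is $F=\Hom_H(\K,-)$, taking $H$-modules to $\K$-vector spaces. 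The lemma gives precisely the natural isomorphism $F\circ G\cong \Hom_{A\#H}(M,-)$ of functors on $A\#H$-modules.

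The second step is to verify the hypotheses of the Grothendieck spectral sequence theorem \cite[Theorem 10.47]{Ro}. These require that $F$ and $G$ be left-exact additive functors between abelian categories with enough injectives, and crucially that $G$ send injective objects to $F$-acyclic objects. Both $\Hom_A(M,-)$ and $\Hom_H(\K,-)$ are manifestly left exact, and the relevant module categories (over $A$, over $H$, over $A\#H$) have enough injectives. The right derived functors are then identified as $R^pG(N)=\Ext_A^p(M,N)$ (computed in the $H$-module enriched sense, but the underlying $\K$-space is the usual $\Ext_A^p(M,N)$ carrying an induced $H$-action) and $R^qF=\Ext_H^q(\K,-)=\HL^q$, the Lie/Hopf cohomology. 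The derived functor of the composite is $R^{p+q}(F\circ G)=\Ext_{A\#H}^{p+q}(M,N)$ by the lemma.

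The main obstacle, and the only step requiring real argument, is the acyclicity condition: one must show that if $I$ is an injective $A\#H$-module, then $G(I)=\Hom_A(M,I)$ is acyclic for $F=\Hom_H(\K,-)$, i.e. $\Ext_H^q(\K,\Hom_A(M,I))=0$ for $q>0$. The standard route is to show that an injective $A\#H$-module, when restricted along $G$, becomes an injective (or at least $F$-acyclic) $H$-module. I would establish this by an adjunction argument: since $A\#H$ is free as a right $H$-module, restriction from $A\#H$-modules to $H$-modules has an exact left adjoint (induction $A\#H\otimes_H-$), so restriction preserves injectives; combined with the fact that $\Hom_A(M,-)$ preserves injectivity when $M$ is suitably flat, or more directly by observing that $\Hom_A(M,I)$ inherits injectivity as an $H$-module, the acyclicity follows. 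Once this Grothendieck condition is checked, the spectral sequence is immediate from the general machinery, and the remaining identifications of the $E_2$-page and the abutment are the routine translations supplied by the lemma.
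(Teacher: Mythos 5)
Your overall strategy is exactly the paper's: the theorem is obtained by applying the Grothendieck spectral sequence to the composite $\Hom_H(\K,-)\circ\Hom_A(M,-)$, with the preceding lemma identifying this composite with $\Hom_{A\# H}(M,-)$. The paper itself offers nothing beyond the citation of Rotman's Theorem 10.47, so in singling out the acyclicity hypothesis as the one point requiring real work you actually go further than the text does. The problem is that your verification of that hypothesis does not go through as written. The adjunction you invoke --- $A\# H$ free as a right $H$-module, hence restriction from $A\# H$-modules to $H$-modules preserves injectives --- concerns the restriction functor, which is $\Hom_A(A,-)$, not the functor $G=\Hom_A(M,-)$ for an arbitrary $A\# H$-module $M$ that actually occurs in the composite. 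The clause ``$\Hom_A(M,-)$ preserves injectivity when $M$ is suitably flat'' is a non sequitur, since no flatness of $M$ is assumed or available, and ``more directly by observing that $\Hom_A(M,I)$ inherits injectivity as an $H$-module'' merely restates the claim to be proved. As it stands, the key step is asserted rather than established.

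The standard argument runs differently. Any injective $A\# H$-module $I$ is a direct summand of a coinduced module $\Hom_\K(A\# H,V)$ for some $\K$-vector space $V$: the canonical monomorphism $I\to\Hom_\K(A\# H,I)$ lands in an injective module (coinduction from $\K$ preserves injectives because the forgetful functor is exact) and therefore splits. Using that $A\# H$ is free as a right $A$-module (this is where bijectivity of the antipode enters), tensor--hom adjunction gives
\[\Hom_A\bigl(M,\Hom_\K(A\# H,V)\bigr)\cong\Hom_\K\bigl((A\# H)\ot_A M,V\bigr)\cong\Hom_\K\bigl(H,\Hom_\K(M,V)\bigr),\]
and one checks that this identification is $H$-linear for the action $(hf)(x)=\sum h_2f(S^{-1}h_1x)$ on the left and a coinduced $H$-structure on the right. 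Coinduced $H$-modules are injective, so $\Hom_A(M,I)$ is a direct summand of an injective $H$-module, hence injective, hence $\Hom_H(\K,-)$-acyclic. With this substituted for your acyclicity step, the remaining identifications of the $E_2$-page and the abutment in your proposal are routine and correct.
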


Consequently, we obtain a Grothendieck spectral sequence
which is handful in calculating Yondena extensions in quasi-Poisson cohomologies.

\begin{cor}
Let $\Q(A)$ the quasi-Poisson enveloping algebra of the Poisson algebra $A$ and
$M, N$ be modules over $\Q(A)$.
Then we have a spectral sequence
\[\Ext_{\U(A)}^q (\K, \Ext_{A^e}^p(M,N))\Longrightarrow \Ext_{\Q(A)}^{p+q}(M,N).\]
\end{cor}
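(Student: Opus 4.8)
The plan is to obtain the Corollary as an immediate specialization of the preceding Theorem, so the entire task reduces to matching up the data. First I would take the Hopf algebra in the Theorem to be $H=\U(A)$, which by the discussion in Section~2 is a cocommutative Hopf algebra under the shuffle coproduct, with counit $\epsilon$ and antipode $S$. Next I would let the $H$-module algebra be the associative enveloping algebra $A^e=A\ot A^\op$, equipped with the $\U(A)$-action $\arr{\al}(a\ot b')=\sum_{\al=\al_1\sqcup\al_2}\arr{\al_1}(a)\ot\arr{\al_2}(b')$ recalled there; this is precisely the structure that makes $A^e$ into a $\U(A)$-module algebra. By the definition of the quasi-Poisson enveloping algebra, the associated smash product is $A^e\#\U(A)=\Q(A)$, and the $A\#H$-modules of the Theorem are exactly $\Q(A)$-modules.

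With these substitutions the Theorem directly furnishes, for any $\Q(A)$-modules $M$ and $N$, a spectral sequence
\[\Ext^q_{\U(A)}\bigl(\K,\Ext^p_{A^e}(M,N)\bigr)\Longrightarrow \Ext^{p+q}_{\Q(A)}(M,N),\]
which is the assertion. Here the inner $\Ext$ is taken over $A^e$, that is, in the category of $A$-$A$-bimodules, and the outer $\Ext$ over $\U(A)$ is Lie algebra cohomology of $A$, so that the spectral sequence is exactly the bridge between the Hochschild and the Lie cohomology promised in the introduction.

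The one point I would need to verify, rather than merely cite, is that the antipode $S$ of $\U(A)$ is invertible, since the $\U(A)$-action on $\Hom_{A^e}(M,N)$ underlying the Theorem is defined through $S^{-1}$ via $(hf)(x)=\sum h_2 f(S^{-1}h_1 x)$. This is where cocommutativity is essential: for any cocommutative Hopf algebra one has $S^2=\id$, whence $S^{-1}=S$ exists and the formula is well defined. I expect this to be the only genuine verification; everything else is the mechanical transport of the general smash-product result into the Poisson setting.
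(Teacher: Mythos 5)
Your proposal is correct and is exactly the route the paper takes: the corollary is obtained by specializing the general smash-product spectral sequence to $H=\U(A)$ and the $\U(A)$-module algebra $A^e$, so that $A^e\#\U(A)=\Q(A)$. Your added observation that cocommutativity gives $S^{-1}=S$, making the $\U(A)$-action on $\Hom_{A^e}(M,N)$ well defined, is a reasonable extra check but does not change the argument.
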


In particular, if we take $M=A$, then we obtain a spectral sequence connecting the Hochschild cohomology
of the associative algebra $A$,
the Cartan-Eilenberg cohomololgy of the Lie algebra $A$ and the quasi-Poisson cohomology of the Poisson algebra $A$.

\begin{thm}\label{spec seq for quasi-Poisson coh}
 Let $A$ be a Poisson algebra and $N$ a quasi-Poisson $A$-module. Then we have
a spectral sequence
\[\HL^q (A, \HH^p(A,N))\Longrightarrow \HQ^{p+q}(A,N).\]
\end{thm}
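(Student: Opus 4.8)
The plan is to obtain this theorem as a direct specialization of the preceding Corollary, followed by a translation of each $\Ext$-group into the cohomology theory it computes. Recall that the Corollary, applied to the Hopf algebra $H=\U(A)$ acting on the module algebra $A^e=A\ot A^{\op}$ with smash product $\Q(A)=A^e\#\U(A)$, already furnishes the spectral sequence
\[\Ext_{\U(A)}^q(\K,\Ext_{A^e}^p(M,N))\Longrightarrow \Ext_{\Q(A)}^{p+q}(M,N).\]
First I would set $M=A$, which yields the $E_2$-page $\Ext_{\U(A)}^q(\K,\Ext_{A^e}^p(A,N))$ converging to $\Ext_{\Q(A)}^{p+q}(A,N)$.

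Next I would identify the three groups. Since $A^e=A\ot A^{\op}$ is the associative enveloping algebra and $N$ is an $A$-bimodule, by definition $\Ext_{A^e}^p(A,N)=\HH^p(A,N)$, the $p$-th Hochschild cohomology of $A$ with coefficients in $N$. Likewise the abutment is $\Ext_{\Q(A)}^{p+q}(A,N)=\HQ^{p+q}(A,N)$ by the very definition of quasi-Poisson cohomology, and the outer functor $\Ext_{\U(A)}^q(\K,-)$ is exactly $\HL^q(A,-)$, the Lie algebra cohomology of $A$, as recorded in Remark \ref{rem-hoch-kos-cpx}. Substituting these identifications into the spectral sequence produces the asserted
\[\HL^q(A,\HH^p(A,N))\Longrightarrow \HQ^{p+q}(A,N).\]

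The one point demanding genuine care — and the step I expect to be the main obstacle — is that $\Ext_{\U(A)}^q(\K,-)$ is applied not to a bare $\K$-space but to $\HH^p(A,N)=\Ext_{A^e}^p(A,N)$ regarded as a $\U(A)$-module, i.e. as a Lie module over $A$. The general construction endows $\Hom_{A^e}(X,Y)$ with the $H$-action $(hf)(x)=\sum h_2 f(S^{-1}h_1 x)$, and this action is inherited by the derived functors $\Ext_{A^e}^{*}(X,Y)$. I would therefore verify that, for $X=A$, the resulting Lie action of $A$ on $\HH^p(A,N)$ coincides with the standard Lie-module structure on Hochschild cohomology induced by the Poisson bracket, so that $\HL^q(A,\HH^p(A,N))$ really is the Lie algebra cohomology of this Lie module. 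Concretely this reduces to computing the $\U(A)$-action on the Hochschild cochain complex $\Hom(A^\bullet,N)$ and checking that it agrees with the bracket action appearing in $\sigma_H$ in the quasi-Poisson complex; once this compatibility is confirmed, the remainder of the argument is a purely formal rewriting of the Corollary under the three notational identifications above.
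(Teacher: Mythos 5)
Your proposal is correct and follows essentially the same route as the paper, which obtains the theorem simply by setting $M=A$ in the preceding corollary and rewriting $\Ext_{A^e}^p(A,N)$, $\Ext_{\U(A)}^q(\K,-)$ and $\Ext_{\Q(A)}^{p+q}(A,N)$ as $\HH^p(A,N)$, $\HL^q(A,-)$ and $\HQ^{p+q}(A,N)$ respectively. The compatibility check you flag --- that the $\U(A)$-action inherited by $\Ext_{A^e}^p(A,N)$ from the action $(hf)(x)=\sum h_2 f(S^{-1}h_1x)$ on $\Hom_{A^e}$ agrees with the bracket action appearing in $\sigma_H$ --- is a genuine point that the paper passes over in silence, so including it only strengthens the argument.
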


\begin{cor}\label{HH=0 for n ge 2}
Let $(A, \cdot, \{-,-\})$ be a Poisson algebra with $\HH^p(A)=0$ for all $p>1$.
Then we have the short exact sequence
\[0\to \HL^{n-1}(A, \Der^{\rm o}(A)) \to \HQ^n(A)\to \HL^n(A, Z(A))\to 0\]
for $n\ge 1$, where $\Der^{\rm o}(A))$ is the space of outer derivations of $A$.
\end{cor}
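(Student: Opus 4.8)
The plan is to specialise the spectral sequence of Theorem \ref{spec seq for quasi-Poisson coh} to $N=A$ and to read off the stated sequence from the degeneration forced by the hypothesis. Concretely, I would start from
\[
E_2^{p,q}=\HL^q(A,\HH^p(A))\Longrightarrow \HQ^{p+q}(A),
\]
and first pin down the nonzero part of the $E_2$-page. Since $\HH^p(A)=0$ for all $p>1$, the page is concentrated in the two columns $p=0$ and $p=1$, and using the standard identifications $\HH^0(A)=\HH^0(A,A)=Z(A)$ and $\HH^1(A)=\Der(A)/\Im(\mathrm{ad})=\Der^{\rm o}(A)$ of Hochschild cohomology in low degrees (here viewed as Lie modules over $A$), I obtain $E_2^{0,q}=\HL^q(A,Z(A))$ and $E_2^{1,q}=\HL^q(A,\Der^{\rm o}(A))$, with all other entries zero.

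Next I would determine the differentials. Because the whole page lives in two adjacent columns, any $d_r$ with $r\ge 3$ shifts the column index by at least two and therefore has either zero source or zero target; hence the spectral sequence satisfies $E_3=E_\infty$. The only differential that can survive is
\[
d_2\colon E_2^{1,q}\to E_2^{0,q+2},\qquad
d_2\colon \HL^q(A,\Der^{\rm o}(A))\to \HL^{q+2}(A,Z(A)).
\]
The induced filtration on $\HQ^n(A)$ then has exactly two possibly nonzero layers, $E_\infty^{1,n-1}$ and $E_\infty^{0,n}$, so that convergence yields a short exact sequence whose two outer terms are a subquotient of $\HL^{n-1}(A,\Der^{\rm o}(A))$ and a subquotient of $\HL^n(A,Z(A))$. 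Matching this with the asserted sequence
\[
0\to \HL^{n-1}(A,\Der^{\rm o}(A))\to \HQ^n(A)\to \HL^n(A,Z(A))\to 0
\]
is the heart of the argument.

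The main obstacle is precisely the behaviour of $d_2$ together with the orientation of the two-step filtration. To get the clean sequence above, with no kernel or cokernel corrections, I must show that every $d_2$ vanishes, so that $E_\infty=E_2$; otherwise convergence only gives $0\to \ker d_2\to \HQ^n(A)\to \mathrm{coker}\,d_2\to 0$ in terms of subquotients of the two $\HL$-groups. I expect to establish the vanishing of $d_2$, and to fix which of the two groups is the sub- and which the quotient-module, by making the transgression explicit through the quasi-Poisson complex $\QC(A,A)$ of Definition \ref{quasi-Poisson complex}: one represents classes of $\HL^{\ast}(A,\Der^{\rm o}(A))$ by cochains of $\QC(A,A)$ built from Hochschild $1$-cocycles, and checks, using the compatibility between the two differentials $\sigma_V$ and $\sigma_H$, that their image under the total differential already lies in the appropriate filtration step. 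I would treat the direction of the filtration with particular care, since it is exactly the datum that distinguishes the stated sequence from its formal dual, and I would cross-check the case $n=1$ against the explicit description of $\HQ^1(A)$ already obtained earlier in the paper.
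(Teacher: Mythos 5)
Your setup is correct and follows the same route as the paper: specialize the spectral sequence of Theorem \ref{spec seq for quasi-Poisson coh} to $N=A$, identify $\HH^0(A)=Z(A)$ and $\HH^1(A)=\Der^{\rm o}(A)$ as Lie modules, and note that the $E_2$-page is concentrated in Hochschild degrees $0$ and $1$. You also correctly locate the only possibly non-trivial differential, $d_2\colon \HL^q(A,\Der^{\rm o}(A))\to \HL^{q+2}(A,Z(A))$, and correctly conclude $E_3=E_\infty$. But you stop exactly where the statement still has content. The Grothendieck spectral sequence for the composite $\Hom_{\U(A)}(\K,-)\circ\Hom_{A^e}(A,-)$ is filtered by the degree of the \emph{outer} derived functor, i.e.\ by the Lie-cohomology degree; the hypothesis $\HH^p(A)=0$ for $p>1$ therefore confines the page to two adjacent values of the \emph{complementary} index. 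This is a two-row, not a two-column, situation: degeneration at $E_2$ is not automatic, and convergence a priori yields only the long exact sequence
\[\cdots\to \HL^{n}(A,Z(A))\to \HQ^{n}(A)\to \HL^{n-1}(A,\Der^{\rm o}(A))\xrightarrow{\ d_2\ }\HL^{n+1}(A,Z(A))\to\cdots,\]
equivalently $0\to \mathrm{coker}\,d_2\to\HQ^n(A)\to\Ker d_2\to 0$. You acknowledge this, but you never prove $d_2=0$: you only state that you ``expect to establish'' it by an explicit cochain computation in $\QC(A,A)$, without carrying it out and without offering a structural reason (say, a Lie-module splitting of $\Der(A)\twoheadleftarrow$ onto $\Der^{\rm o}(A)$, or a multiplicativity argument) for the transgression to vanish. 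Since the vanishing of $d_2$ is precisely what separates the asserted short exact sequence from the formal consequence of the spectral sequence, the proposal as written is not a complete proof.

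Two further remarks. First, your worry about the orientation of the filtration is well founded: even granting $d_2=0$, the filtration places $\HL^n(A,Z(A))$ as the subobject of $\HQ^n(A)$ and $\HL^{n-1}(A,\Der^{\rm o}(A))$ as the quotient, the opposite of the corollary as printed; over a field this does not affect dimension counts, but it does mean the displayed arrows are not the edge maps. Second, the paper's own proof consists of the low-degree identifications followed by a bare citation of Proposition 2.4 of \cite{Fu} and does not address the $d_2$ issue either, so your analysis has isolated a genuine weak point of the published argument rather than introduced a new one; nevertheless, to complete the proof you must either verify $d_2=0$ directly on the bicomplex $\Hom(A^\bullet\ot\wedge^\bullet,A)$ or settle for the long-exact-sequence conclusion.
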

\begin{proof}[\textbf{Proof}]
By the assumption, we have
\begin{center}
$\HH^p(A)=0$ $(\forall p\ge 2)$, $\HH^1(A)=\Der^{\rm o}(A)$ and $\HH^0(A)=Z(A)$.
\end{center}
From Proposition 2.4 in \cite{Fu}, we have the following short exact sequence
\[0\to \HL^{n-1}(A, \Der^{\rm o}(A)) \to \HQ^n(A)\to \HL^n(A, Z(A))\to 0\]
for $n\ge 1$.
\end{proof}

\begin{cor}\label{prop-trivial-hoch}
Let $(A, \cdot, \{-,-\})$ be a Poisson algebra over $\K$ with $\HH^i(A)=0$ for all $i>0$. Then
$\HQ^*(A)\cong Z(A)\otimes \HL^*(A,\K)$ as graded algebras.
\end{cor}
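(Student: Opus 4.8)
The plan is to feed the Grothendieck spectral sequence of Theorem \ref{spec seq for quasi-Poisson coh} with $N=A$ and use the hypothesis to collapse it. Since $\HH^p(A)=\HH^p(A,A)=0$ for every $p>0$, the $E_2$-page $E_2^{p,q}=\HL^q(A,\HH^p(A,A))$ is concentrated in the single column $p=0$, where $E_2^{0,q}=\HL^q(A,\HH^0(A,A))=\HL^q(A,Z(A))$. A spectral sequence supported on one column degenerates at $E_2$, so $\HQ^n(A)\cong\HL^n(A,Z(A))$ for all $n$. (Alternatively, one may invoke Corollary \ref{HH=0 for n ge 2}: its hypothesis holds a fortiori, and since now $\Der^{\rm o}(A)=\HH^1(A)=0$ the outer term $\HL^{n-1}(A,\Der^{\rm o}(A))$ vanishes, leaving the same isomorphism.)

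The decisive step is to show that $Z(A)=\HH^0(A,A)$ is a \emph{trivial} Lie module over $A$. The Lie action is $a\cdot z=\{a,z\}$, and first one checks that $Z(A)$ is stable under it: if $z\in Z(A)$, then applying the Leibniz rule to $zb=bz$ gives $\{a,z\}b=b\{a,z\}$, so $\{a,z\}\in Z(A)$. The Leibniz rule also shows that $\{a,-\}|_{Z(A)}\colon Z(A)\to Z(A)$ is a $\K$-linear derivation of the commutative algebra $Z(A)$. Now the vanishing of all higher Hochschild cohomology of $A$ forces $Z(A)$ to be a reduced (indeed unramified) commutative $\K$-algebra, which consequently admits no nonzero $\K$-derivation; hence $\{a,z\}=0$ for all $a\in A$ and $z\in Z(A)$, i.e. $Z(A)$ is the trivial Lie module. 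I expect this point---that the associative hypothesis $\HH^{>0}(A)=0$ kills the Lie action on the center---to be the main obstacle, since it is precisely where the associative and Lie structures must be reconciled; the motivating case of a separable $A$, whose center is a product of separable field extensions, makes the mechanism transparent.

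With $Z(A)$ trivial, the Chevalley--Eilenberg complex $\Hom(\wedge^\bullet A,Z(A))$ computing $\HL^*(A,Z(A))$ is just the complex computing $\HL^*(A,\K)$ tensored over $\K$ with the constant coefficient space $Z(A)$. Therefore $\HL^n(A,Z(A))\cong Z(A)\otimes\HL^n(A,\K)$, and combined with the collapse above this yields the $\K$-isomorphism $\HQ^*(A)\cong Z(A)\otimes\HL^*(A,\K)$.

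Finally I would upgrade this to an isomorphism of graded algebras. The identification $\HQ^*(A)\cong\HL^*(A,Z(A))$ is multiplicative because the Grothendieck spectral sequence is compatible with the Yoneda/cup products, and being concentrated in one column its edge map is a ring homomorphism. On the other side, for the trivial coefficient module $Z(A)$ the cup product on $\HL^*(A,Z(A))$ is induced by the multiplication of $Z(A)$ together with the cup product on $\HL^*(A,\K)$; this is exactly the product on $Z(A)\otimes\HL^*(A,\K)$ with $Z(A)$ placed in degree $0$. Tracing these identifications gives the desired graded algebra isomorphism.
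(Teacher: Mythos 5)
Your overall architecture is the same as the paper's: collapse the spectral sequence (equivalently, invoke Corollary \ref{HH=0 for n ge 2} with $\Der^{\rm o}(A)=\HH^1(A)=0$) to get $\HQ^n(A)\cong\HL^n(A,Z(A))$, then show $Z(A)$ is a trivial Lie module so that $\HL^*(A,Z(A))\cong Z(A)\otimes\HL^*(A,\K)$. That part is fine. The problem is precisely the step you yourself flag as decisive. Your argument there is: $\HH^{>0}(A,A)=0$ forces $Z(A)$ to be reduced, indeed unramified over $\K$, hence without nonzero $\K$-derivations. This is asserted, not proved, and it does not follow from the hypotheses. The corollary does not assume $A$ finite-dimensional, and ``reduced'' does not imply ``no derivations'' ($\K[t]$ is reduced); ``unramified'' would suffice, but there is no visible route from the vanishing of $\HH^{>0}(A,A)$ to $\Omega_{Z(A)/\K}=0$: a derivation of $Z(A)$ need not extend to a Hochschild $1$-cocycle of $A$ with values in $A$, so $\HH^1(A,A)=0$ gives you no control over $\Der_\K(Z(A))$ in general. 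Even in the finite-dimensional commutative case, where the equivalence $\HH^1(A)=0\Leftrightarrow A$ \'etale does hold in characteristic zero, you would still owe an argument; in the stated generality the claim is a genuine gap.

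The correct mechanism is much more direct, and it is what the paper uses: you do not need to kill \emph{all} derivations of $Z(A)$, only the specific ones $\{a,-\}|_{Z(A)}$. By the Leibniz rule each $\mathrm{ham}(a)=\{a,-\}$ is a derivation of the associative algebra $(A,\cdot)$; since $\HH^1(A)=\Der(A)/\mathrm{Inn}(A)=0$, it is inner, say $\{a,-\}=[b_a,-]$; and an inner derivation annihilates $Z(A)$, so $\{a,z\}=b_az-zb_a=0$ for all $z\in Z(A)$. In the paper's language, $A$ is an inner Poisson algebra and hence $Z(A)\subseteq Z\{A\}$ (Lemma 1.1 of \cite{YYZ}). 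With this substitution the rest of your write-up --- the identification $\HL^n(A,Z(A))\cong Z(A)\otimes\HL^n(A,\K)$ for a trivial coefficient module and the multiplicativity discussion, which is actually more detailed than the paper's --- goes through.
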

\begin{proof}[\textbf{Proof}]
By Corollary \ref{HH=0 for n ge 2}, we have
\[\HQ^n(A)\cong \HL^n(A, Z(A)).\]
Furthermore, $A$ is an inner Poisson algebra and hence $Z(A)\subset Z\{A\}$ since $A$ has only inner derivations.
So we have
\[\HQ^*(A)\cong Z(A)\otimes \HL^*(A,\K).\]
\end{proof}

\begin{cor}
Let $A$ be a finite dimensional Poisson algebra. If ${\rm gl.dim} A <\infty$ as an associative algebra,
then ${\rm proj.dim}_{\Q(A)}A <\infty$ and $\HQ^\ast(A)$ is finite-dimensional.
\end{cor}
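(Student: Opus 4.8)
The plan is to read the finiteness off the Grothendieck spectral sequence of Theorem \ref{spec seq for quasi-Poisson coh}, exploiting that under the present hypotheses both of its input cohomologies have finite cohomological dimension. First I would record the two relevant vanishing ranges. Since $A$ is a finite-dimensional algebra over $\K$, and $\K$, being of characteristic zero, is perfect, the finiteness of ${\rm gl.dim}\,A$ forces the Hochschild cohomological dimension $\mathrm{hd}(A):={\rm proj.dim}_{A^e}A$ to be finite as well. Writing $d=\mathrm{hd}(A)$, this means $\HH^p(A,L)=\Ext^p_{A^e}(A,L)=0$ for every $A$-bimodule $L$ and every $p>d$. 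Independently, $A$ is finite-dimensional as a Lie algebra, say $m=\dim_\K A$, and the Chevalley--Eilenberg complex $\Hom(\wedge^\bullet A,L)$ computing $\HL^q(A,L)=\Ext^q_{\U(A)}(\K,L)$ is concentrated in degrees $0\le q\le m$, because $\wedge^q A=0$ for $q>m$; hence $\HL^q(A,L)=0$ for all $q>m$ and all Lie modules $L$.

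Next I would feed these ranges into the spectral sequence. For an arbitrary quasi-Poisson module $N$, Theorem \ref{spec seq for quasi-Poisson coh} furnishes a first-quadrant spectral sequence with $E_2^{p,q}=\HL^q(A,\HH^p(A,N))$ converging to $\HQ^{p+q}(A,N)=\Ext^{p+q}_{\Q(A)}(A,N)$. By the above, $E_2^{p,q}=0$ unless $0\le p\le d$ and $0\le q\le m$, so the entire $E_2$ page, and therefore $E_\infty$, is supported in a finite rectangle. Since $\HQ^n(A,N)$ carries a finite filtration whose subquotients are the $E_\infty^{p,q}$ with $p+q=n$, it vanishes as soon as $n>d+m$. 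As $N$ was arbitrary, $\Ext^n_{\Q(A)}(A,N)=0$ for all $n>d+m$ and all $\Q(A)$-modules $N$, which is exactly ${\rm proj.dim}_{\Q(A)}A\le d+m<\infty$.

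For the finite-dimensionality of $\HQ^*(A)$ I would specialize to $N=A$. Each $E_2^{p,q}=\HL^q(A,\HH^p(A,A))$ is finite-dimensional: $\HH^p(A,A)$ is a subquotient of the finite-dimensional space $\Hom(A^{\ot p},A)$, and $\HL^q(A,-)$ of a finite-dimensional module is a subquotient of the finite-dimensional space $\Hom(\wedge^q A,-)$. Only finitely many $E_2^{p,q}$ are nonzero, so every $\HQ^n(A)$ is finite-dimensional and $\HQ^n(A)=0$ for $n>d+m$; hence $\HQ^*(A)=\bigoplus_n\HQ^n(A)$ is finite-dimensional.

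The one genuinely non-formal ingredient, and thus the main obstacle, is the passage from ${\rm gl.dim}\,A<\infty$ to ${\rm proj.dim}_{A^e}A<\infty$: this is a classical fact but it relies essentially on the perfectness of $\K$ (so that the semisimple quotient $A/\mathrm{rad}\,A$ is separable), and the two dimensions are not interchangeable for an arbitrary ground field. I would therefore invoke the characteristic-zero hypothesis explicitly at this step; everything else is the formal bookkeeping of a spectral sequence concentrated in a finite rectangle.
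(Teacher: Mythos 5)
Your proposal is correct and follows essentially the same route as the paper: feed the vanishing of $\HH^p(A,-)$ for $p$ large (from finite global dimension) and of $\HL^q(A,-)$ for $q>\dim_\K A$ into the Grothendieck spectral sequence of Theorem \ref{spec seq for quasi-Poisson coh}, concluding that $\Ext^n_{\Q(A)}(A,N)$ vanishes for $n$ large and every $N$. Your only departure is a welcome extra precision: you justify the step ${\rm gl.dim}\,A<\infty \Rightarrow {\rm proj.dim}_{A^e}A<\infty$ via separability of $A/\mathrm{rad}\,A$ over the perfect field $\K$, a point the paper passes over by simply writing ${\rm proj.dim}_{A^e}A={\rm gl.dim}\,A$.
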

\begin{proof}[\textbf{Proof}] Let $M$ be a quasi-Poisson module over $A$.
Since ${\rm proj.dim}_{A^e}A={\rm gl.dim} A<\infty$, we have
$\HH^p(A,M)=0$
for $p\gg 0$. On the other hand, $\HL^q(A, -)=0$ for $q\gg 0$ since $A$ is finite-dimensional.
By the spectral sequence in Theorem \ref{spec seq for quasi-Poisson coh}, we have
$\Ext_{\Q(A)}^n(A,M)=\HQ^n(A, M)=0$ for $n\gg 0$ and hence ${\rm proj.dim}_{\Q(A)}A <\infty$
and $\HQ^\ast(A)$ is finite-dimensional.
\end{proof}

\begin{ex}
Let $Q$ be a finite connected quiver with underlying graph being a tree. Denote by $\K Q$ the path algebra of $Q$.
Then we have $\HH^p(\K Q)=0$ for any $p\ge 1$, see Section 1.6 in  \cite{Ha1}. Consider the standard Poisson algebra of $\K Q$,
by Proposition \ref{prop-trivial-hoch} it is immediate that $\HQ^n(\K Q)=\HL^n(\K Q, \K)$, the usual $n$-th Lie algebra cohomology group of
$(A,[-,-])$ with coefficients in $\K$.
\end{ex}

\begin{ex}
Let $Q$ be the 2-Kronecker quiver and $A$ be the path algebra of $Q$.
Then we have $\HH^p(A)=0$ for all $p\ge 2$, $\HH^1(A)=\K^3$, and $\HH^1(A)=\K$.
It follows from some easy calculations that $\HH^1(A)$ is a trivial module over the Lie algebra
$(A, [-,-])$. Consider the standard Poisson algebra of $A$, by some direct by tedious calculations, we have $\HL^0(A, \K)=\K$, $\HL^1(A,\K)=\K^2$, $\HL^2(A,\K)=\K$,
and $\HL^p(A,\K)=0$ for any $p\ge 3$. By Corollary \ref{HH=0 for n ge 2}, we have $\HQ^0(A)=\K$,
$\HQ^1(A)=\K^5$, $\HQ^2(A)=\K^7$, $\HQ^3(A)=\K^3$ and $\HQ^n(A)=0$ for all $n\ge 4$.

\end{ex}

\vspace{1cm}

{\footnotesize
\noindent Yan-Hong Bao \\
School of Mathematical Sciences, Anhui University, Hefei 230039, China \\
School of Mathematical Sciences, University of Science and Technology of China,
Hefei 230036, China\\
E-mail address: yhbao@ustc.edu.cn

\vspace{5mm}
\noindent Yu Ye\\
School of Mathematical Sciences, University of Science and Technology of China,
Hefei 230036, China \\
E-mail address: yeyu@ustc.edu.cn
}

\end{document}